\let\pa\partial
\let\na\nabla
\let\eps\varepsilon
\newcommand{\N}{{\mathbb N}}
\newcommand{\R}{{\mathbb R}}
\newcommand{\diver}{\operatorname{div}}
\newcommand{\dv}{\mathrm{d}v}
\newcommand{\ds}{\mathrm{d}s}
\newcommand{\vv}{\langle v\rangle}
\newcommand{\red}{\textcolor{black}}
\newcommand{\blue}{\textcolor{black}}
\newtheorem{theorem}{Theorem}
\newtheorem{lemma}[theorem]{Lemma}
\newtheorem{remark}[theorem]{Remark}
\begin{document}

\title[Multispecies Fokker--Planck--Landau system]{Global weak solutions for 
a nonlocal \\ multispecies Fokker--Planck--Landau system}

\author[J. Hu]{Jingwei Hu}
\address{Department of Applied Mathematics, University of Washington, Seattle, WA 98195, USA}
\email{hujw@uw.edu}

\author[A. J\"ungel]{Ansgar J\"ungel}
\address{Institute of Analysis and Scientific Computing, Technische Universit\"at Wien,
Wiedner Hauptstra\ss e 8--10, 1040 Wien, Austria}
\email{juengel@tuwien.ac.at}

\author[N. Zamponi]{Nicola Zamponi}
\address{Institut f\"ur Angewandte Analysis, Universit\"at Ulm, Helmholtzstra{\ss}e 18,
89081 Ulm, Germany}
\email{nicola.zamponi@uni-ulm.de}

\date{\today}

\thanks{The first and second authors thank the Isaac Newton Institute for Mathematical
Sciences for support and hospitality during the programme ``Frontiers in Kinetic Theory''
when work on this paper was undertaken. 
The first author is partially supported under the NSF grant DMS-2153208, 
AFOSR grant FA9550-21-1-0358, and DOE grant DE-SC0023164.
The second and third authors acknowledge partial support from
the Austrian Science Fund (FWF), grants P33010 and F65.
This work has received funding from the European
Research Council (ERC) under the European Union's Horizon 2020 research and
innovation programme, ERC Advanced Grant no.~101018153.}

\begin{abstract}
The global-in-time existence of weak solutions to a spatially homogeneous multispecies
Fokker--Planck--Landau system for plasmas in the three-dimensional whole space is shown. 
The Fokker--Planck--Landau system is a simplification of the Landau equations
assuming a linearized, velocity-independent, and isotropic kernel. The resulting equations
depend nonlocally and nonlinearly on the moments of the distribution functions via the
multispecies local Maxwellians. The existence proof is
based on a three-level approximation scheme, energy and entropy estimates, as well as
compactness results, and it holds for both soft and hard potentials.
\end{abstract}

\keywords{Kinetic Fokker--Planck equation, multicomponent plasma, energy conservation,
entropy decay, existence of solutions, compactness.}

\subjclass[2000]{35K40, 35K55, 35B09, 76T99, 82B40}

\maketitle


\section{Introduction}

The Fokker--Planck--Landau equations describe the local collisional relaxation process
of the particle distribution functions in plasmas under binary collisions \cite{ArBu91}.
In this paper, we investigate a multispecies, linearized, 
spatially homogeneous version of these equations. 
More precisely, the distribution functions $f_i(v,t)$ of the $i$th species
of the multicomponent plasma, depending on the velocity $v\in\R^3$ and time $t\ge 0$,
are assumed to satisfy the initial-value problem
\begin{align}\label{1.eq}
  & \pa_t f_i = \sum_{j=1}^s c_{ji}\diver\bigg(\na f_i + m_i\frac{v-u_{ji}}{T_{ji}}
	f_i\bigg)\quad\mbox{in }\R^3,\ t>0,\\
  & f_i(\cdot,0)=f_i^0\quad\mbox{in }\R^3, \ i=1,\ldots,s, \label{1.ic}
\end{align}
where $s\in\N$ is the number of species and $m_i>0$ the molar mass of the $i$th species. 
Before defining the quantities $c_{ji}$, $u_{ji}$, and $T_{ji}$, we introduce
the moments of $f_i$, namely
the number density $n_i$, partial velocity $u_i$, and partial temperature $T_i$ by
\begin{equation}\label{1.mom}
  n_i = \int_{\R^3}f_i\dv, \quad u_i = \frac{1}{n_i}\int_{\R^3}f_i v\dv, \quad
	T_i = \frac{m_i}{3n_i}\int_{\R^3}f_i|v-u_i|^2\dv,
\end{equation}
as well as the partial mass density $\rho_i=m_i n_i$. 
Then the diffusion coefficients $c_{ji}$ and ``multispecies'' velocities $u_{ji}$ and
temperatures $T_{ji}$ are given by
\begin{align}
  c_{ji} &= \frac{|\log\Lambda|q_i^2q_j^2}{8\pi\eps_0^2m_i^2} 
	n_j\bigg(\frac{T_j}{m_j}\bigg)^{\gamma/2}, 
	\label{1.cji} \\
	u_{ji} &= \frac{c_{ji}m_i\rho_i u_i + c_{ij}m_j\rho_j u_j}{c_{ji}m_i\rho_i+c_{ij}m_j\rho_j}, 
	\label{1.uji} \\
	T_{ji} &= \frac{c_{ji}\rho_i T_i + c_{ij}\rho_j T_j}{c_{ji}\rho_i + c_{ij}\rho_j} +
  \frac{c_{ji}m_i\rho_i c_{ij}m_j\rho_j |u_i - u_j|^2}{3(c_{ji}\rho_i + c_{ij}\rho_j)
  (c_{ji}m_i\rho_i + c_{ij}m_j\rho_j)}, \label{1.Tji}
\end{align}
where $\log\Lambda>0$ is the Coulomb logarithm, $\Lambda>0$ being related 
to the Debye length, $\eps_0$ is the vacuum permittivity,
$q_i$ is the charge of the $i$th species,
and $\gamma\in\R$ models the interaction strength between particles: \blue{in particular, $\gamma>2$ corresponds to hard potentials, $\gamma<2$ corresponds to soft potentials, and $\gamma=-1$ is the Coulomb interaction} (see Section \ref{sec.model} for details). 

Note that $c_{ji}$, $u_{ji}$, and $T_{ji}$ are functions of
time only, and they depend in a nonlocal and nonlinear way 
on the distribution functions. We write
$c_{ji}[f]=c_{ji}$, $u_{ji}[f]=u_{ji}$, and $T_{ji}[f]=T_{ji}$ with
$f=(f_1,\ldots,f_s)$ to make this dependence clear.
Observe that the symmetries $T_{ij}=T_{ji}$ and $u_{ji}=u_{ij}$ for $j\neq i$ 
hold as well as $T_{ii}=T_i$ and $u_{ii}=u_i$.

Single-species kinetic Fokker--Planck equations, \blue{often coupled with Vlasov equation with spatial dependence},
have been mathematically studied in the literature since 
the 1980s; see, e.g., \cite{Deg86}. One main interest was the proof of hypocoercivity
\cite{DMS15,Vil09}. There are only a few works concerned
with multispecies models. The diffusion limit of a kinetic Fokker--Planck system
for charged particles towards the Nernst--Planck equations was proved in \cite{WLL15}.
Furthermore, in \cite{FiNe22,Her16}, the limit of vanishing electron--ion mass ratios 
for nonhomogeneous kinetic Fokker--Planck systems was investigated. The
multispecies modeling in \cite{FiNe22} is very close to ours, but the model of \cite{FiNe22}
also includes spatial and electric effects. However, an existence analysis of multispecies
Fokker--Planck systems, \blue{even in the spatially homogeneous case}, is missing in the literature. In this paper, up to our knowledge,
we provide such an analysis for the first time.

Equations \eqref{1.eq}--\eqref{1.Tji} are a simplification of the Fokker--Planck--Landau
system (see Section \ref{sec.model}). In this context, the right-hand side
of \eqref{1.eq} can be interpreted as the collision operator
$$
  Q_{ji}(f_i) = \sum_{j=1}^s c_{ji}\diver\bigg(\na f_i + m_i\frac{v-u_{ji}}{T_{ji}}
	f_i\bigg).
$$
Our model satisfies some physical properties, like mass, momentum, and
energy conservation (see Lemma \ref{lem.cons} in Section \ref{sec.model}),
$$
  \frac{\mathrm{d}}{\mathrm{d}t}\int_{\R^3}(m_if_i + m_jf_j)\mu(v)\dv = 0 \quad\mbox{for }
	\mu(v)=1,v,|v|^2,
$$
and it fulfills an H-theorem or the entropy decay 
(see Lemma \ref{lem.ent} in Section \ref{sec.model}),
$$
  \frac{\mathrm{d}}{\mathrm{d}t}\sum_{i=1}^s\int_{\R^3}f_i\log f_i\dv
	= -\sum_{i,j=1}^s\int_{\R^3}c_{ji}f_i\bigg|\na\log\frac{f_i}{M_{ij}}\bigg|^2\dv \le 0,
$$
which follows from the gradient-flow-type formulation of \eqref{1.eq},
\begin{equation}\label{1.gradflow}
  \pa_t f_i = \sum_{j=1}^s c_{ji}\diver\bigg(f_i\na\log\frac{f_i}{M_{ij}}\bigg)
	\quad\mbox{in }\R^3,\ t>0,\ i=1,\ldots,s,
\end{equation}
where $M_{ij}$ are the ``multispecies'' Maxwellians
\begin{equation}\label{1.Mji}
  M_{ij}(v) = n_i\bigg(\frac{m_i}{2\pi T_{ij}}\bigg)^{3/2}
	\exp\bigg(-\frac{m_i|v-u_{ij}|^2}{2T_{ij}}\bigg).
\end{equation}
Based on these properties, we are able to prove the global existence of weak solutions 
to \eqref{1.eq}--\eqref{1.Tji}. To simplify the notation, we set $\vv:=(1+|v|^2)^{1/2}$.

\begin{theorem}\label{thm.ex}
Let $f_i^0\in L^1(\R^3;\vv^2\dv)$ be nonnegative with 
$\int_{\R^3}f_i^0\log f_i^0\dv<\infty$, let $\gamma\in\R$,
and let the constants $m_i,q_i,\Lambda,\eps_0>0$ for $i=1,\ldots,s$. Then, for any $T>0$,
there exists a nonnegative weak solution $f_i$ to \eqref{1.eq}--\eqref{1.Tji} satisfying
for all $i=1,\ldots,s$, 
\begin{align*}
  & f_i\in L^\infty(0,T;L^1(\R^3;\vv^2\dv))\cap L^2(0,T;H^1(\R^3)), \\
	& f_i\log f_i\in L^\infty(0,T;L^1(\R^3)), \quad \pa_tf_i\in L^1(0,T;W^{-1,1}(\R^3)).
\end{align*}
Moreover, there exists a constant $c>0$ such that
$T_{ji}(t)\ge c>0$, \textcolor{black}{$c_{ji}(t)\ge c>0$} for $t\in(0,T)$ and 
$c_{ji}\in L^\infty(0,T)$, $u_{ji}\in L^q(0,T)$ for any $q<\infty$. 
\end{theorem}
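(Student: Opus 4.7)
The plan is to construct weak solutions via a three-level approximation scheme: regularize the coefficients, solve the resulting linear Fokker--Planck equations by a Schauder fixed point, and then pass to the limit using the energy and entropy estimates combined with compactness.

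First, for small parameters $0<\eta<\delta$ I would introduce smooth, uniformly bounded surrogates $c_{ji}^{\delta,\eta}$, $u_{ji}^{\delta,\eta}$, $T_{ji}^{\delta,\eta}$ of \eqref{1.cji}--\eqref{1.Tji}, obtained by cutting $T_{ji}$ from below by $\delta$, capping $c_{ji}$ from above by $1/\eta$, and regularizing the moment integrals in \eqref{1.mom} (for instance by a velocity cutoff and a mollification in $v$), done in such a way that the symmetries $c_{ji}\rho_i \leftrightarrow c_{ij}\rho_j$, $u_{ij}=u_{ji}$, $T_{ij}=T_{ji}$ are preserved. For a fixed input $\tilde f=(\tilde f_1,\dots,\tilde f_s)$ in a ball of, say, $L^2(0,T;L^2(\R^3;\vv^2\dv))$, plugging $\tilde f$ into the regularized coefficients decouples \eqref{1.eq} into $s$ linear Fokker--Planck equations with smooth, bounded, time-dependent drifts; standard parabolic theory supplies a unique nonnegative solution $f$ in $L^2(0,T;H^1(\R^3))\cap C([0,T];L^2)$. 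Schauder's fixed point theorem, with compactness provided by Aubin--Lions applied to the $H^1$-gain and an equation-based bound on $\pa_t f$, then yields a solution of the regularized coupled system.

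The heart of the argument is the derivation of estimates uniform in $\delta,\eta$, based on the conservation and entropy-dissipation structures of Lemmas \ref{lem.cons} and \ref{lem.ent} (whose proofs carry over to the regularized system by construction). Total mass, momentum, and energy conservation give $\int_{\R^3} f_i\vv^2\dv\in L^\infty(0,T)$, which in particular yields upper bounds on $T_i$ and on $n_i|u_i|^2$ by Cauchy--Schwarz. The H-theorem gives a uniform bound on $\int f_i\log f_i\dv$ as well as a uniform $L^1((0,T)\times\R^3)$ bound on the entropy-dissipation density $c_{ji}f_i|\na\log(f_i/M_{ij})|^2$, from which one extracts the functional regularity of $f_i$ entering Theorem~\ref{thm.ex}. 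The critical step is a strictly positive lower bound on $T_{ji}$ uniform in the approximation parameters. To this end, I would invoke the Maxwellian variational principle
$$
  \int_{\R^3} f_j\log f_j\dv \;\ge\; n_j\log n_j - \frac{3n_j}{2}\log\!\left(\frac{2\pi e\, T_j}{m_j}\right),
$$
which, combined with the conservation of mass $n_j(t)=n_j^0>0$ and the entropy upper bound just derived, forces $T_j(t)\ge c>0$. Since $T_{ji}$ in \eqref{1.Tji} is a convex combination of $T_i,T_j$ plus a nonnegative term, we obtain $T_{ji}\ge c>0$. Together with the upper bound on $T_j$ from energy conservation, the formula \eqref{1.cji} then gives $c_{ji}\ge c>0$ irrespective of the sign of $\gamma$, and the representation \eqref{1.uji} together with the bound $n_i|u_i|^2\le \int f_i|v|^2\dv$ yields $u_{ji}\in L^q(0,T)$.

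With these uniform estimates in hand, passage to the limit $\eta,\delta\to 0$ proceeds by combining the $H^1$-regularity in $v$, the $W^{-1,1}$-bound in $t$ derived from the equation, and the tightness furnished by the uniform $\vv^2$-moment bound: Aubin--Lions on bounded velocity balls plus a diagonal argument yields strong $L^1((0,T)\times\R^3)$ convergence of $f_i$ along a subsequence, hence a.e.\ convergence of the moments $n_i,u_i,T_i$ and of $c_{ji},u_{ji},T_{ji}$, while the $f_i\log f_i$ bound provides equi-integrability via de la Vall\'ee-Poussin. This suffices to pass to the limit in every nonlinear term of the weak formulation. The main obstacle I anticipate is precisely the uniform lower bound on $T_{ji}$ and $c_{ji}$, which is what makes the drift $(v-u_{ji})/T_{ji}$ amenable to control and allows the nonlinear coefficients to converge; the remedy is to design the mollification so that mass is strictly conserved and the entropy identity of Lemma \ref{lem.ent} holds at the approximate level, thereby putting the Maxwellian variational principle directly to use on the approximate $f_i$.
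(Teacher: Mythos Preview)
Your outline captures the right high-level strategy and the variational-principle lower bound on $T_j$ is a clean alternative to the paper's Fenchel--Young argument in Lemma~\ref{lem.lowerT}. However, there is a genuine gap in the claim that you can simultaneously (i) truncate $T_{ji}$ from below and cap $c_{ji}$ from above, and (ii) have Lemmas~\ref{lem.cons} and~\ref{lem.ent} hold at the approximate level ``by construction''. The formula \eqref{1.Tji} for $T_{ji}$ is \emph{derived} from the requirement of energy conservation \eqref{1.energy}; replacing $T_{ji}$ by $\max\{T_{ji},\delta\}$ breaks this algebraic identity, and the proof of the entropy decay in Lemma~\ref{lem.ent} explicitly uses energy conservation (to show $\int Q_{ji}(f_i)\log M_{ij}\,\mathrm{d}v=-\int Q_{ij}(f_j)\log M_{ji}\,\mathrm{d}v$). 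So once you cut $T_{ji}$, the entropy inequality is no longer available, and with it your route to the lower bound on $T_j$ via the Maxwellian variational principle collapses---the argument becomes circular.

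The paper resolves this tension by separating the approximation levels and the order of the limits. The coefficient truncation (parameter $\eps$) is paired with an additional regularization (parameter $\delta$: a $p$-Laplacian term and a higher-moment damping term $\delta\langle v\rangle^K f_i$), and the limits $M\to\infty$, $\eps\to 0$ are carried out using only $L^2$- and $W^{1,p}$-estimates that do \emph{not} rely on the conservation or entropy structure. Only after $\eps\to 0$, when the exact (untruncated) coefficients $c_{ji},u_{ji},T_{ji}$ are restored and Lemma~\ref{lem.cons} applies, does the paper derive the energy and entropy bounds (Lemma~\ref{lem.delta}) and the uniform lower bound on $T_{ji}$, and then pass $\delta\to 0$. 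Your scheme would need an analogous decoupling: either a source of $\delta$-uniform compactness that does not use entropy, or a regularization of the coefficients that provably preserves \eqref{1.energy} exactly---and the latter is not compatible with a hard lower cutoff on $T_{ji}$.
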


For the proof, we show first the existence of solutions to an
approximate problem, derive estimates uniform in the approximation parameters, and
then pass to the limit of vanishing parameters using compactness arguments. The construction
of the approximate scheme is surprisingly delicate, and we need three approximation levels.
First, we solve a regularized version of \eqref{1.eq} in the ball $B_M$ around the origin
with radius $M>0$ to avoid compactness issues due to the whole space $\R^3$. Second,
we truncate the nonlocal terms with the parameter $\eps>0$
in such a way that $c_{ji}[f]$ and $T_{ji}[f]$ are 
positive and bounded from \textcolor{black}{below} and $|u_{ji}[f]|$ is bounded from above.
Third, we need an elliptic regularization yielding $W^{1,p}(\R^3)$ solutions
with $p>3$ and a moment regularization yielding estimates for higher-order moments,
both with the same parameter $\delta>0$. 
More precisely, we add to the right-hand side of the truncated system
the expressions
$$
  E_1 = \delta\diver(|\na f_i|^{p-2}\na f_i), \quad
	E_2 = -\delta\vv^{K}f_i + \delta g(v)\int_{B_M}\vv^{K}f_i\dv,
$$
where $g(v)=\pi^{-3/2}e^{-|v|^2}$ satisfies $\int_{\R^3}g(v)\dv=1$, and $p>3$ and $K>2$
are sufficiently large. Expression $E_1$
yields an estimate for $\na f_i$ in $L^p(\R^d)$, while expression $E_2$ provides
an estimate for $f_i$ in $L^1(\R^3;\vv^{K}\dv)$. The latter term is
constructed in such a way that the mass is controlled (and conserved when $B_M$ is replaced
by $\R^3$ in the limit $M\to\infty$), since
$$
  \int_{B_M}E_2\dv = -\delta\bigg(1 - \int_{B_M}g(v)\dv\bigg)\int_{B_M}\vv^{K}f_i\dv \le 0.
$$
However, this regularization provides additional terms when using 
the test functions $f_i$, $\log f_i$, and $|v|^2$ to derive
bounds for the $L^2(\R^3)$ norm, the entropy, and the energy. 
For instance, using the test function $f_i$ in the approximated
system (see \eqref{2.approx} below), we infer from $c_{ji}[f]\ge\eps$ after some
computations, detailed in Section \ref{sec.ex}, that
\begin{align*}
  \frac12\frac{\mathrm{d}}{\mathrm{d}t}\int_{\R^3}f_i^2\dv
	&+ \delta\int_{\R^3}\vv^K f_i^2\dv + \delta\int_{\R^3}|\na f_i|^p\dv
	+ \eps\int_{\R^3}|\na f_i|^2\dv \\
	&\le C(\eps)\int_{\R^3}f_i^2\dv	+ \delta\int_{\R^3}\vv^K f_i\dv.
\end{align*}
In order to bound the last term on the right-hand side, we use (a cutoff version of)
the test function $\vv^\theta$ for $0<\theta<1-3/p$, 
which gives bounds for higher-order moments depending
on $\delta$. This is sufficient to pass to the limit $M\to\infty$ and then $\eps\to 0$.
\textcolor{black}{We point out that for the limit $M\to\infty$ we first need to show that the solution $f_i$ is nonnegative, then that the mass is nonincreasing in time, and finally we derive uniform bounds for $f_i$ in weighted Lebesgue spaces and for $\nabla f_i$ in $L^2\cap L^p$. Aubin-Lions Lemma coupled with a Cantor diagonal argument yield strong convergence of a subsequence in $L^2(B\times (0,T))$ for every $B\subset\R^3$ bounded; the uniform bound on a moment of $f_i$ allow us to infer the strong convergence of the subsequence in $L^2(\R^3\times (0,T))$.}
For the limit $\delta\to 0$, we derive uniform estimates for the entropy and energy as well as
the higher-order moment bound $\delta\int_{\R^3}\vv^{K+2}f_i\dv \le C$,
where the constant $C>0$ only depends on the initial entropy and energy. This is sufficient
to show that $E_2\to 0$ as $\delta\to 0$. 

Another issue is the limit $\delta\to 0$ in the collision operator since it
requires uniform bounds for the nonlocal terms $c_{ji}[f]$, $T_{ji}[f]$, and $u_{ji}[f]$.
The most delicate point is the proof of a uniform positive lower bound for the
temperature $T_{ji}[f]$. The idea is to estimate $T_{ji}[f]\ge\min\{T_i,T_j\}$ and
$$
  T_{i} \ge C\int_{\{|v-u_i|>\lambda\}}f_i|v-u_i|\dv
	\ge C\lambda^2\int_{\{|v-u_i|>\lambda\}}f_i\dv
	\ge C\lambda^2\bigg(n_i - \int_{\{|v-u_i|<\lambda\}}f_i\dv\bigg),
$$
where $\lambda>0$ is arbitrary.
By the Fenchel--Young inequality, we can estimate the integral on the right-hand side
in terms of the initial entropy plus a number, and a suitable choice of the parameters
allows us to conclude a lower bound only depending on the initial entropy; see Lemma 
\ref{lem.lowerT}.

Because of the truncations, we need to perform the limits $M\to\infty$, $\eps\to 0$,
and $\delta\to 0$ separately. Indeed, the energy conservation property of the 
collision operator holds only on the level of the nontruncated quantities $c_{ji}$,
$T_{ji}$, and $u_{ji}$. Therefore, we pass to the limit $\eps\to 0$ before deriving
the energy and entropy bounds that eventually allow us to perform the limit $\delta\to 0$.

Let us discuss some extensions of Theorem \ref{thm.ex}.
Our existence result also holds in the $d$-dimensional space. In this case, we choose $p>d$
and adjust the parameters $\theta>0$ and $K>2$ in a suitable way. 
We may also assume more general
functions $c_{ji}[f]$, $u_{ji}[f]$, and $T_{ji}[f]$. 
It is possible to generalize the dependency of $c_{ji}[f]$ on $T_j$, but a suitable
growth condition is needed. 
The choice of $u_{ji}[f]$ and $T_{ji}[f]$ guarantees momentum
and energy conservation (see Section \ref{sec.FPL}), and their definitions need to be
compatible with these conservation properties.


\textcolor{black}{There is no significant technical difference between the cases $\gamma\geq 2$ (hard potentials) and $\gamma < 2$ (soft potentials), nor between the cases $\gamma>0$ and $\gamma<0$. As a matter of fact, the derivation of \eqref{1.eq} destroys the singularity of the kernel in the Landau equation, meaning that choosing any value of $\gamma\in\R$ would not bring any additional technical difficulty besides the need to change the truncation in the temperature in the approximated system. }


\textcolor{black}{
It is not difficult to see that the solution $f_i(t)$ converges to a Maxwellian distribution as $t\to\infty$:
\begin{align}\label{Mstar}
f_i(t)\to M_{i}^*(v) = n_i\bigg(\frac{m_i}{2\pi T^*}\bigg)^{3/2}
\exp\bigg(-\frac{m_i|v-u^*|^2}{2T^*}\bigg)\qquad\mbox{strongly in }L^1(\R^3,(1+|v|^2)\dv),
\end{align}
where $u^* = \lim_{t\to\infty}u_{i}(t) = \lim_{t\to\infty}u_{ij}(t)$,
$T^* = \lim_{t\to\infty}T_{i}(t) = \lim_{t\to\infty}T_{ij}(t)$.
In particular the multispecies momentum and temperature become independent of the species, as $t\to\infty$. This is seen be considering the H-theorem and employing the fact that $c_{ji}(t)\geq \alpha>0$ for $t>0$ (we point out that the lower bound for the temperatures $T_i(t)$ is independent of the final time; check the proof of Lemma 10):
\begin{align*}
\frac{\mathrm{d}}{\mathrm{d}t}\sum_{i=1}^s\int_{\R^3}f_i\log f_i\dv
&= -\sum_{i,j=1}^s\int_{\R^3}c_{ji}f_i\bigg|\na\log\frac{f_i}{M_{ij}}\bigg|^2\dv
= -4\sum_{i,j=1}^s c_{ji}\int_{\R^3}\bigg|\na\sqrt{\frac{f_i}{M_{ij}}}\bigg|^2
M_{ij}\dv .
\end{align*}
since $M_{ij}$ can be transformed via a scaling into a normalized Gaussian distribution, one can apply a log-Sobolev inequality (with Gaussian weight) and infer
\begin{align*}
\sum_{i,j=1}^s c_{ji}\int_{\R^3}\bigg|\na\sqrt{\frac{f_i}{M_{ij}}}\bigg|^2
M_{ij}\dv \geq \alpha_1 C_{LS}\sum_{i,j=1}^s \int_{\R^3}f_i\log\frac{f_i}{M_{ij}} \dv ,\qquad\alpha_1>0.
\end{align*}
Integrating the entropy balance in time yields therefore
\begin{align*}
\int_0^\infty\int_{\R^3}\sum_{i,j=1}^s \left(
\frac{f_i}{M_{ij}}\log\frac{f_i}{M_{ij}} - \frac{f_i}{M_{ij}} + 1\right) M_{ij}\dv dt =
\int_0^\infty\int_{\R^3}\sum_{i,j=1}^s f_i\log\frac{f_i}{M_{ij}} \dv dt < \infty,
\end{align*}
implying (as the integrand is nonnegative) that a sequence $t_n\nearrow\infty$ of time instants exists such that 
\begin{align*}
\int_{\R^3}\sum_{i,j=1}^s \left(
\frac{f_i}{M_{ij}}\log\frac{f_i}{M_{ij}} - \frac{f_i}{M_{ij}} + 1\right) M_{ij}\dv\Big\vert_{t=t_n}\to 0.
\end{align*}
Let $f^{(n)} = f(\cdot,t_n)$, $M_{ij}^{(n)} = M_{ij}(\cdot,t_n)$.
Since $M_{ij}$ can be bounded from below via a normalized Gaussian distribution, we deduce that $f^{(n)}_i - M^{(n)}_{ij}\to 0$ a.e.~in $\R^3$. Given the bounds for $f^{(n)}_i\log f_i^{(n)}$, $f^{(n)}_i |v|^m$ in $L^1(\R^3)$ (for some $m>2$), we infer by dominated convergence that  $f^{(n)}_i - M^{(n)}_{ij}\to 0$ strongly in $L^1(\R^3,(1+|v|^2)\dv)$.
In particular the quantities $u^{(n)}_{ij}$, $T^{(n)}_{ij}$ converge to limits that are independent of $j$. Given the definition of $u^{(n)}_{ij}$, $T^{(n)}_{ij}$, it follows that the limits of the moments $u^{(n)}_i$, $T^{(n)}_i$ must be independent of $i$, yielding $f^{(n)}_i \to M^*_i$ strongly in $L^1(\R^3,(1+|v|^2)\dv)$. However, the entropy 
\begin{align*}
\sum_{i=1}^s\int_{\R^3}f_i\log f_i\dv = 
\sum_{i=1}^s\int_{\R^3}\left(f_i\log \frac{f_i}{M_i^*} - f_i + M_i^*\right)\dv + C
\end{align*}
is nonincreasing in time, hence \eqref{Mstar} holds.
Thus for long times the dynamics of the species become decoupled.
}



\textcolor{black}{
Concerning the regularity of the solutions, given the structure of the equation (in particular, the fact that the coefficients $c_{ji}$, $u_{ji}$, $T_{ji}$ are independent of $v$) we expect the solutions to be $C^\infty$ in the variable $v$ for positive times (and regular for nonnegative times as long as the initial datum is smooth enough). The time regularity issue is less trivial. However, the strictly positive lower bound for the temperatures and the coefficients $c_{ji}$, as well as the boundedness of all the coefficients, lead us to thinking that it would be possible to prove higher time regularity results by iteratively differentiating the equation in time and proving bounds for the coefficients by exploiting the regularity properties already shown in the previous step of the iteration procedure. Such bounds for the coefficients would be then employed to prove estimates for the solution in $H^k(0,T; H^m(\R^3))$ spaces. We expect this bootstrap argument to yield $C^\infty$ space-time regularity for the solution as long as the initial datum is $H^\infty$ and quickly decaying at infinity. \newline
While the uniqueness of smooth solutions originating from regular initial data would be straightforward, the uniqueness of weak solutions in the case of nonsmooth initial data is unclear. We speculate that applying the ``relative entropy method'' might yield uniqueness for general weak solutions, i.e.~considering the quantity (relative entropy)
\begin{align*}
H(f\vert g) = \int_{\R^3}f\log(f/g) \dv,
\end{align*}
where $f$, $g$ are two solutions with the same initial datum, differentiating it in time and trying to show via a Gronwall-like argument that $H(f(t)|g(t))=0$ for every $t>0$, implying $f(t)=g(t)$ for $t>0$. The argument is by no means straightforward, as we are dealing with coefficients depending on the solution itself, but nonetheless the method might work since it mirrors the entropy structure of the system.
}



\textcolor{black}{
Finally, we believe that performing a similar existence analysis in the spatially inhomogeneous case should be doable without excessive difficulties, as the techniques employed in the proof extend also to the aforementioned case. Also, we believe that it should be possible to prove existence of H solutions to the multi-species full Landau system \eqref{1.eq2}, \eqref{1.landau} by adapting the approach of Villani \cite{Vil98} to the multi-species case. Both issues, though interesting, are beyond the scope of the present paper and possibly the subject of future investigation.
}

The paper is organized as follows. Some details on the physical assumptions leading
to model \eqref{1.eq}--\eqref{1.Tji} are given in Section \ref{sec.model}. Section
\ref{sec.ex} is devoted to the proof of Theorem \ref{thm.ex}. A compactness result 
in $\R^3$ is shown in Appendix \ref{sec.comp}, 
and the rigorous treatment of nonintegrable test functions
is sketched in Appendix \ref{sec.app}.


\section{Motivation of the model and some properties}\label{sec.model}

In this section, we motivate the Fokker--Planck--Landau system \eqref{1.eq} and
detail the underlying physical assumptions leading to this model. Moreover, we
discuss its conservation properties and the H-theorem (entropy decay).

\subsection{The homogeneous Fokker--Planck--Landau system}

Model \eqref{1.eq}--\eqref{1.Tji} is a simplification of the spatially homogeneous 
multispecies Landau system by linearizing the Landau collision operator and
assuming that the operator kernel is independent of the velocity. More precisely, let 
\begin{equation}\label{1.eq2}
  \pa_t f_i = \sum_{j=1}^s \widehat{Q}_{ji}(f_j,f_i)\quad\mbox{in }\R^3,\ t>0,\ i=1\ldots,s,
\end{equation}
be the spatially homogeneous Landau equation \cite{ChCo52} for a plasma consisting of
$s$ species. The Landau collision operator
$\widehat{Q}_{ji}(f_j,f_i)$ models binary collisions between species $j$ and $i$:
\begin{equation}\label{1.landau}
  \widehat{Q}_{ji}(f_j,f_i) = \widehat{c}_{ji}\operatorname{div}_v\bigg\{\int_{\R^3}
	A(v-v_*)\bigg(f_j(v_*)\na_v f_i(v)
	- \frac{m_i}{m_j}f_i(v)\na_{v_*}f_j(v_*)\bigg)\mathrm{d}v_*\bigg\},
\end{equation}
where $\widehat{c}_{ji}=|\log\Lambda| q_i^2q_j^2/(8\pi\eps_0^2m_i^2)$ is a constant 
and $A(z)=|z|^{\beta+2}(\mathbb{I}-z\otimes z/|z|^2)$ is the (positive semidefinite)
kernel matrix with $\mathbb{I}$ being the $3\times 3$ identity matrix. 
The parameter $\beta$ refers to the case of hard potentials if $\beta>0$, Maxwellian
molecules if $\beta=0$, and soft potentials if $\beta<0$. The latter case includes
Coulomb interactions with $\beta=-3$. 
The Landau equation is obtained as the grazing collisions limit of the Boltzmann
equation \cite{ArBu91,Des92,Vil98}. 
A spectral-gap analysis for the multispecies Landau system was performed in
\cite{GuZa17}. We also refer to this reference for results on the well-posedness of
the single-species equation.

The collision operator $\widehat{Q}_{ji}$ conserves mass, momentum, and energy. 
Indeed, it can be written in the weak form
\begin{align}\label{1.weak1}
  \int_{\R^3}\widehat{Q}_{ji}(f_j,f_i)\phi\dv
	&= -\widehat{c}_{ji}\int_{\R^3}\int_{\R^3}\na_v\phi(v)^T	A(v-v_*) \\
	&\phantom{xx}{}\times\bigg(\na_v\log f_i(v)-\frac{m_i}{m_j}\na_{v_*}\log f_j(v_*)\bigg)
	f_i(v)f_j(v_*)\dv\dv_* \nonumber
\end{align}
for suitable test functions $\phi$. We obtain mass conservation by choosing $\phi=1$:
$$
  \int_{\R^3}\widehat{Q}_{ji}(f_j,f_i)\dv = 0, \quad i,j=1,\ldots,s.
$$
Using $\widehat{c}_{ji}m_i/m_j=\widehat{c}_{ij}m_j/m_i$
and exchanging $v$ and $v_*$, a computation shows that
\begin{align}\label{1.weak2}
  \int_{\R^3}\widehat{Q}_{ij}(f_i,f_j)\psi\dv
	&= \widehat{c}_{ji}\frac{m_i}{m_j}\int_{\R^3}\int_{\R^3}\na_{v_*}\psi(v_*)^T A(v-v_*) \\
	&\phantom{xx}{}\times \bigg(\na_v\log f_i(v)-\frac{m_i}{m_j}\na_{v_*}\log f_j(v_*)\bigg)
	f_i(v)f_j(v_*)\dv\dv_* \nonumber 
\end{align}
for another test function $\psi$, and an addition of \eqref{1.weak1} and \eqref{1.weak2} gives
\begin{align*}
  \int_{\R^3}&\big(\widehat{Q}_{ji}(f_j,f_i)\phi + \widehat{Q}_{ij}(f_i,f_j)\psi\big)\dv
	= -\widehat{c}_{ji}\int_{\R^3}\int_{\R^3}
	\bigg(\na_v\phi(v)-\frac{m_i}{m_j}\na_{v_*}\psi(v_*)\bigg)^T \\
	&{}\times A(v-v_*)\bigg(\na_v\log f_i(v)-\frac{m_i}{m_j}\na_{v_*}\log f_j(v_*)\bigg)
	f_i(v)f_j(v_*)\dv\dv_*.
\end{align*}
Then conservation of momentum follows by choosing $\phi(v)=m_iv$ and $\psi(v)=m_jv$,
$$
  \int_{\R^3}\widehat{Q}_{ji}(f_j,f_i)m_iv\dv 
	+ \int_{\R^3}\widehat{Q}_{ij}(f_i,f_j)m_jv\dv = 0;
$$
conservation of energy follows after the choice $\phi(v)=m_i|v|^2$ and $\psi(v)=m_j|v|^2$,
$$
  \int_{\R^3}\widehat{Q}_{ji}(f_j,f_i)m_i|v|^2\dv 
	+ \int_{\R^3}\widehat{Q}_{ij}(f_i,f_j)m_j|v|^2\dv = 0;
$$
and we obtain entropy decay after choosing $\phi(v)=\log f_i(v)$ and $\psi(v)=\log f_j(v)$:
$$
  \int_{\R^3}\widehat{Q}_{ji}(f_j,f_i)\log f_i\dv + \int_{\R^3}\widehat{Q}_{ij}(f_i,f_j)\log f_j\dv \le 0,
	\quad i,j=1,\ldots,s.
$$ 


\subsection{The homogeneous linearized Fokker--Planck--Landau system}\label{sec.FPL}

In this section, we derive model \eqref{1.eq}--\eqref{1.Tji} from the full multi-species Landau system presented in the previous section. 
Our derivation is motivated by \cite{HHM17}, where a multi-species BGK model is obtained from the multi-species Boltzmann equation.
We make two simplifications in model \eqref{1.eq2}--\eqref{1.landau}. 
First, we replace $f_j$ in $\widehat{Q}_{ji}(f_j,f_i)$ by the Maxwellian
$$
  M_{ji} = n_j\bigg(\frac{m_j}{2\pi T_{ji}}\bigg)^{3/2}\exp\bigg(
	-\frac{m_j|v-u_{ji}|^2}{2T_{ji}}\bigg),
$$
where \red{$n_j$ is given by \eqref{1.mom}, $u_{ji}$ and $T_{ji}$ are yet to be determined.} Then the collision operator becomes
\begin{align*}
  & \widehat{Q}_{ji}(M_{ji},f_i) = \widehat{c}_{ji}\diver
	\bigg\{\widehat{A}_{ji}(v)\bigg(\na f_i
	+ m_i\frac{v-u_{ji}}{T_{ji}}f_i\bigg)\bigg\}, \\
	& \mbox{where } \widehat{A}_{ji}(v) = \int_{\R^3}A(v-v_*)M_{ji}(v_*)\mathrm{d}v_*.
\end{align*}
In this step, we used the fact $A(z)z=0$ for $z\in\R^3$ and from now on, all derivatives are with respect to $v$. Second, we suppose that the matrix $\widehat{A}_{ji}$ is independent of the velocity $v$
(otherwise, the computation of the moments becomes awkward) and that $\widehat{A}_{ji}$ is
diagonal (i.e., we neglect anisotropic diffusion). This leads to the Dougherty operator (see \cite{FJHHE22} for a similar model)
\begin{equation}\label{1.Qji}
  Q_{ji}(f_i) = c_{ji}\diver\bigg(\na f_i	+ m_i\frac{v-u_{ji}}{T_{ji}}f_i\bigg),
\end{equation}
where the coefficients $c_{ji}$ should be a reasonable approximation of the exact
expression
$$
  \widehat{c}_{ji}\widehat{A}_{ji}(v) = \frac{|\log\Lambda|q_i^2q_j^2}{8\pi\eps_0^2m_i^2}
	\int_{\R^3}A(v-v_*)M_{ji}(v_*)\mathrm{d}v_*.
$$
Assuming that the kinetic energy $m_j|v-v_*|^2$ is of the order of the thermal energy
$T_j$ (we neglected the Boltzmann constant), 
we may approximate $A(v-v_*)$ by $(T_{j}/m_j)^{(\beta+2)/2}\blue{\mathbb{I}}$, such that
we can replace $\widehat{c}_{ji}\widehat{A}_{ji}$ by 
$$
 c_{ji} := \frac{|\log\Lambda|q_i^2q_j^2}{8\pi\eps_0^2m_i^2}n_j
	\bigg(\frac{T_{j}}{m_j}\bigg)^{(\beta+2)/2},
$$
and the definition for $c_{ji}$
is exactly \eqref{1.cji} after setting $\gamma:=\beta+2$. 

To determine $u_{ji}$ and $T_{ji}$, we assume that the operator \eqref{1.Qji} conserves
the momentum and energy (mass is automatically preserved):
\begin{align}
  \int_{\R^3}Q_{ji}(f_i)m_iv\dv 
	+ \int_{\R^3}Q_{ij}(f_j)m_jv\dv &= 0, \label{1.momentum} \\
  \int_{\R^3}Q_{ji}(f_i)m_i|v|^2\dv 
	+ \int_{\R^3}Q_{ij}(f_j)m_j|v|^2\dv &= 0, \quad i,j=1,\ldots,s. \label{1.energy}
\end{align}
Then a straightforward computation leads to the expressions \eqref{1.uji} and \eqref{1.Tji}.
We summarize:

\begin{lemma}[Conservation properties]\label{lem.cons}
Let $u_{ji}$ and $T_{ji}$ be given by \eqref{1.uji} and \eqref{1.Tji}, respectively.
Then $Q_{ji}$ conserves the mass, momentum, and energy in the sense of
\eqref{1.momentum} and \eqref{1.energy}.
\end{lemma}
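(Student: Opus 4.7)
\smallskip
\noindent\textbf{Proof proposal.} The proof is a direct calculation based on integration by parts and the definitions of the moments \eqref{1.mom}, so the plan is simply to compute $\int_{\R^3}Q_{ji}(f_i)\mu(v)\dv$ for $\mu\in\{1,v,|v|^2\}$ and verify that \eqref{1.uji}--\eqref{1.Tji} are \emph{exactly} what is needed to obtain cancellation after symmetrizing in $(i,j)$. Throughout, I assume enough decay at infinity so that all boundary terms vanish; this is justified in the weak formulation used in Section \ref{sec.ex}. Mass conservation is immediate, as $Q_{ji}(f_i)$ is in divergence form, so $\int_{\R^3}Q_{ji}(f_i)\dv=0$ for each pair $(i,j)$.

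For momentum, I would test \eqref{1.Qji} against $v_k$ and integrate by parts to get
\begin{equation*}
\int_{\R^3}Q_{ji}(f_i)v_k\dv = -c_{ji}\int_{\R^3}\bigg(\pa_{v_k}f_i+m_i\frac{v_k-u_{ji,k}}{T_{ji}}f_i\bigg)\dv = -\frac{c_{ji}m_i n_i}{T_{ji}}(u_{i,k}-u_{ji,k}),
\end{equation*}
since $\int\pa_{v_k}f_i\dv=0$ and $\int v_k f_i\dv=n_iu_{i,k}$. Multiplying by $m_i$ and using $\rho_i=m_in_i$, then adding the $(i,j)\leftrightarrow(j,i)$ counterpart and invoking the symmetries $T_{ij}=T_{ji}$ and $u_{ij}=u_{ji}$, the sum in \eqref{1.momentum} reduces to
\begin{equation*}
-\frac{1}{T_{ji}}\big[c_{ji}m_i\rho_i(u_i-u_{ji})+c_{ij}m_j\rho_j(u_j-u_{ji})\big].
\end{equation*}
This vanishes precisely when $u_{ji}$ is the weighted barycenter given in \eqref{1.uji}, proving \eqref{1.momentum}.

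For energy, testing against $|v|^2$ (so $\nabla|v|^2=2v$) and integrating by parts yields
\begin{equation*}
\int_{\R^3}Q_{ji}(f_i)|v|^2\dv = 6c_{ji}n_i - \frac{2c_{ji}m_i}{T_{ji}}\int_{\R^3}(v-u_{ji})\cdot v\, f_i\dv.
\end{equation*}
The key identity $\int|v|^2 f_i\dv = 3n_iT_i/m_i+n_i|u_i|^2$ (split $|v|^2=|v-u_i|^2+2(v-u_i)\cdot u_i+|u_i|^2$ and use the definitions of $u_i,T_i$) converts this to
\begin{equation*}
\int_{\R^3}Q_{ji}(f_i)m_i|v|^2\dv = \frac{6c_{ji}\rho_i(T_{ji}-T_i)}{T_{ji}} - \frac{2c_{ji}m_i\rho_i}{T_{ji}}\,u_i\cdot(u_i-u_{ji}).
\end{equation*}
Summing over $(i,j)\leftrightarrow(j,i)$, the remaining calculation is to show that the quadratic form in $u_i,u_j$ reduces nicely. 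Using the formula for $u_{ji}$ from the momentum step, one computes $u_i-u_{ji}=c_{ij}m_j\rho_j(u_i-u_j)/D$ and $u_j-u_{ji}=-c_{ji}m_i\rho_i(u_i-u_j)/D$, where $D:=c_{ji}m_i\rho_i+c_{ij}m_j\rho_j$; the cross-term then collapses to $c_{ji}c_{ij}m_im_j\rho_i\rho_j|u_i-u_j|^2/D$. Setting the total sum equal to zero and solving for $T_{ji}$ gives exactly \eqref{1.Tji}.

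The calculation is entirely algebraic once the integrations by parts are carried out; the only mild obstacle is bookkeeping the symmetry assumptions ($T_{ji}=T_{ij}$, $u_{ji}=u_{ij}$) and ensuring the algebraic reduction of the quadratic form, but this is a straightforward verification rather than a genuine difficulty.
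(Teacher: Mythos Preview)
Your proposal is correct and is precisely the ``straightforward computation'' the paper alludes to but does not write out: the paper simply states that imposing \eqref{1.momentum}--\eqref{1.energy} and performing the calculation yields \eqref{1.uji}--\eqref{1.Tji}, without giving details. Your integration-by-parts computation, the reduction of the momentum condition to the barycenter formula, and the algebraic collapse of the cross term via $u_i-u_{ji}=c_{ij}m_j\rho_j(u_i-u_j)/D$ are exactly the intended verification.
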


The collision operator $Q_{ji}$ also fulfills an H-theorem.

\begin{lemma}[Entropy decay]\label{lem.ent}
It holds formally that
$$
	\int_{\R^3}Q_{ji}(f_i)\log f_i\dv + \int_{\R^3}Q_{ij}(f_j)\log f_j\dv \le 0, \quad
	i,j=1,\ldots,s.
$$
\end{lemma}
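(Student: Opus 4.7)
The approach is to recognize the collision operator $Q_{ji}$ as the gradient flow \eqref{1.gradflow} with respect to the multispecies Maxwellian \eqref{1.Mji}. Since $\nabla\log M_{ij}=-m_i(v-u_{ij})/T_{ij}$ and the symmetries $u_{ji}=u_{ij}$, $T_{ji}=T_{ij}$ (including the diagonal case $u_{ii}=u_i$, $T_{ii}=T_i$) make the drift in $Q_{ji}(f_i)$ coincide with $-f_i\nabla\log M_{ij}$, one can rewrite
\begin{equation*}
  Q_{ji}(f_i) = c_{ji}\diver\bigl(f_i\nabla\log(f_i/M_{ij})\bigr),
\end{equation*}
so that $M_{ij}$ is the local equilibrium of $Q_{ji}$. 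I would then split $\log f_i=\log(f_i/M_{ij})+\log M_{ij}$ and test $Q_{ji}(f_i)$ against each piece separately.

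The first piece, after integration by parts (formally, assuming enough decay so that boundary terms vanish), produces the nonpositive dissipation
\begin{equation*}
  \int_{\R^3}Q_{ji}(f_i)\log(f_i/M_{ij})\dv
  = -c_{ji}\int_{\R^3}f_i\bigl|\nabla\log(f_i/M_{ij})\bigr|^2\dv \le 0,
\end{equation*}
and the same bound holds with $i,j$ interchanged. The remaining task is therefore to show that the two "drift" contributions $\int Q_{ji}(f_i)\log M_{ij}\dv$ and $\int Q_{ij}(f_j)\log M_{ji}\dv$ cancel.

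For this, I would use that $\log M_{ij}$ is, up to a $v$-independent constant, a quadratic polynomial in $v$; mass conservation annihilates the constant term, and expanding $|v-u_{ij}|^2$ shows that
\begin{equation*}
  \int_{\R^3}Q_{ji}(f_i)\log M_{ij}\dv
  = -\frac{1}{2T_{ij}}\int_{\R^3}Q_{ji}(f_i)\,m_i|v-u_{ij}|^2\dv.
\end{equation*}
Adding this to the analogous identity with $i,j$ swapped and invoking the symmetry $T_{ij}=T_{ji}$ together with the momentum and energy conservation laws \eqref{1.momentum}, \eqref{1.energy} from Lemma \ref{lem.cons} (the leftover $|u_{ij}|^2$ cross-term being killed by mass conservation), one obtains
\begin{equation*}
  \int_{\R^3}Q_{ji}(f_i)\log M_{ij}\dv + \int_{\R^3}Q_{ij}(f_j)\log M_{ji}\dv = 0.
\end{equation*}
Combining this cancellation with the two nonpositive dissipation terms yields the claim.

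The main subtlety is conceptual rather than technical: one must measure the entropy production against the "cross" Maxwellian $M_{ij}$ (whose parameters $u_{ij},T_{ij}$ are precisely the ones appearing in $Q_{ji}$) and treat the pair $(i,j)$ jointly, because individually the drift integrals have no fixed sign. The proof is purely formal; justifying the integrations by parts and the use of unbounded test functions requires the approximation scheme and moment bounds introduced in later sections.
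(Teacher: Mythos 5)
Your proof is correct and follows essentially the same path as the paper's: split $\log f_i=\log(f_i/M_{ij})+\log M_{ij}$, obtain the nonpositive Fisher-information-type dissipation from the first piece via the gradient-flow form \eqref{1.gradflow}, and show the $\log M_{ij}$ contributions cancel in pairs by expanding the quadratic in $v$ and invoking mass, momentum, and energy conservation together with the symmetries $u_{ji}=u_{ij}$, $T_{ji}=T_{ij}$. This is precisely the argument given in the paper.
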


\begin{proof}
We use definition \eqref{1.Mji} of the Maxwellian and the conservation properties of $Q_{ji}$:
\begin{align*}
  \int_{\R^3}Q_{ji}(f_i)\log M_{ij}\dv 
	&= \int_{R^3}Q_{ji}(f_i)\bigg(\log n_i+\frac32\log\frac{m_i}{2\pi T_{ji}}
	- \frac{m_i}{2T_{ji}}|v-u_{ji}|^2\bigg)\dv \\
	&= -\frac{m_i}{2T_{ji}}\int_{\R^3}Q_{ji}(f_i)|v-u_{ji}|^2\dv \\
  &= \frac{u_{ji}}{T_{ji}}\int_{\R^3}Q_{ji}(f_i)m_iv\dv - \frac{1}{2T_{ji}}\int_{\R^3}
	Q_{ji}(f_i)m_i|v|^2\dv \\
	&= -\frac{u_{ij}}{T_{ij}}\int_{\R^3}Q_{ij}(f_j)m_jv\dv + \frac{1}{2T_{ij}}\int_{\R^3}
	Q_{ij}(f_j)m_j|v|^2\dv \\
	&= \frac{m_j}{2T_{ij}}\int_{\R^3}Q_{ij}(f_j)|v-u_{ji}|^2\dv 
	= -\int_{\R^3}Q_{ij}(f_j)\log M_{ji}\dv,
\end{align*}
where we also used the symmetry of $u_{ji}$ and $T_{ji}$.
Therefore, \eqref{1.gradflow} yields
\begin{align*}
  \int_{\R^3}Q_{ji}(f_i)&\log f_i\dv + \int_{\R^3}Q_{ij}(f_j)\log f_j\dv \\
	&= \int_{\R^3}Q_{ji}(f_i)\log\frac{f_i}{M_{ij}}\dv
	+ \int_{\R^3}Q_{ij}(f_j)\log\frac{f_j}{M_{ji}}\dv \\
	&= -\int_{\R^3}c_{ji}f_i\bigg|\na\log\frac{f_i}{M_{ij}}\bigg|^2\dv
	- \int_{\R^3}c_{ij}f_j\bigg|\na\log\frac{f_j}{M_{ij}}\bigg|^2\dv \le 0,
\end{align*}
ending the proof.
\end{proof}

\begin{remark}\rm
For later use, we note that it holds formally that
\begin{align}\label{1.QlogM}
  0 &= -\frac12\sum_{i,j=1}^s\int_{\R^3}\big(Q_{ji}(f_i)\log M_{ij}\dv 
	+ Q_{ij}(f_j)\log M_{ji}\big)\dv \\
	&= -\sum_{i,j=1}^s \int_{\R^3}Q_{ji}(f_i)\log M_{ij}\dv
	= \sum_{i,j=1}^s\int_{\R^3}c_{ji}f_i\na\log\frac{f_i}{M_{ij}}\cdot\na\log M_{ji}\dv.
	\nonumber
\end{align}
\end{remark}

\blue{To summarize, in order to obtain the simplified system \eqref{1.eq}--\eqref{1.Tji} from the true multispecies Landau system, we have made two levels of approximations: 1) replacing $f_j$ in $\widehat{Q}_{ji}(f_j,f_i)$ by its corresponding Maxwellian; 2) replacing the kernel matrix $A(v-v_*)$ by a multiple of the identity matrix $(T_j/m_j)^{(\beta+2)/2}\mathbb{I}$ and matching the unit. These two approximations seem to depart significantly from the true Landau model, yet we are able to retain its most basic properties, namely, conservation of mass, momentum, and energy, as well as decay of entropy. It's worth noting that the conservation of momentum and energy only holds at the global level, that is, there is still momentum/energy exchange between species. Specifically, only $n_i$ and $\rho_i$, and global velocity $u$ and temperature $T$ ($u:=\sum_i \rho_i u_i/\sum_i \rho_i$, $T=\sum_i m_i \int f_i|v-u|^2\dv /3 \sum_i n_i$) remain as constant, but $u_i$, $T_i$, $c_{ji}$, $u_{ji}$, and $T_{ji}$ do not. Furthermore, when all species are the same, the system \eqref{1.eq}--\eqref{1.Tji} just reduces to the single-species Fokker-Planck equation 
\begin{align}
 \pa_t f = c\diver\left(\na f + m\frac{v-u}{T}f\right),
\end{align}
with $n= \int f \dv$, $u = \frac{1}{n}\int f v\dv$, $T=\frac{m}{3n}\int f|v-u|^2\dv$, and 
$c= \frac{|\log\Lambda|q^4}{8\pi\eps_0^2m^2}n
	\left(\frac{T}{m}\right)^{(\beta+2)/2}$.
This equation is linear since $n$, $u$, $T$, and $c$ all remain as constant due to conservation of mass, momentum, and energy.
}


\section{Proof of Theorem \ref{thm.ex}}\label{sec.ex}

We prove the existence of weak solutions by introducing an approximate scheme,
deriving suitable estimates uniform in the approximation parameters, and then
passing to the limit of vanishing approximation parameters. 
Recall that $\vv:=(1+|v|^2)^{1/2}$ and $g(v)=\pi^{-3/2}e^{-|v|^2}$ for $v\in\R^3$.
We set $z^+=\max\{0,z\}$ for $z\in\R$, and
we choose the parameters $p>3$ and $K>0$ sufficiently large (to be specified later). 
Our approximated system is based on three approximation levels:
the truncated domain size $M>0$, the truncation parameter $0<\eps<1$, and the regularization
parameter $0<\delta<1$:
\begin{align}\label{2.approx}
  \pa_t f_i &+ \delta\bigg(\vv^K f_i - g(v)\int_{B_M}\vv^K f_i^+ \dv
	\bigg) - \delta\diver\big(|\na f_i|^{p-2}\na f_i\big) \\
	&= \sum_{j=1}^s c_{ji}^\eps[f]\diver\bigg(\na f_i + \frac{m_if_i}{T_{ji}^\eps[f]}
	(v-u_{ji}[f])\bigg)\quad\mbox{in }B_M,\ t>0, \nonumber
\end{align}
with the initial conditions \eqref{1.ic} and the no-flux boundary conditions 
\begin{equation}\label{1.bc}
  \bigg\{\delta|\na f_i|^{p-2}\na f_i
	+ \sum_{j=1}^s c_{ji}^\eps[f]\bigg(\na f_i + \frac{m_if_i}{T_{ji}^\eps[f]}
	(v-u_{ji}[f])\bigg)\bigg\}\cdot\nu = 0\quad\mbox{on }\pa B_M,\ t>0,
\end{equation}
where $f=(f_1,\ldots,f_s)$, $B_M\subset\R^3$ is the ball around the origin with radius $M$, 
and $\nu$ is the exterior unit normal vector to $\pa B_M$. The nonlinear coefficients
are approximated by 
\begin{align}
  c_{ji}^\eps[f] &= 
  \begin{cases}\displaystyle
  \frac{|\log\Lambda|q_i^2 q_j^2}{8\pi\eps_0^2 m_i^2}n_j \bigg(
  \frac{T^{\eps,\uparrow}_j[f]}{m_j}\bigg)^{\gamma/2} + \eps, & \gamma\geq 0\\
	\displaystyle
  \frac{|\log\Lambda|q_i^2 q_j^2}{8\pi\eps_0^2 m_i^2}n_j \bigg(
  \frac{T^{\eps,\downarrow}_j[f]}{m_j}\bigg)^{\gamma/2} + \eps, & \gamma < 0
  \end{cases}
  \nonumber \\ 
	u_{ji}^\eps [f] &= \frac{c_{ji}^\eps[f] m_i\rho_i u_i^\eps[f] + c_{ij}^\eps[f] m_j\rho_j 
	u_j^\eps[f]}{c_{ji}^\eps[f] m_i\rho_i + c_{ij}^\eps[f] m_j\rho_j}, \label{2.ujieps} \\
	T^\eps_{ji}[f] &= \frac{c_{ji}^\eps[f]\rho_i T^{\eps,\downarrow}_i[f] 
	+ c_{ij}^\eps[f]\rho_j T^{\eps,\downarrow}_j[f]}{c_{ji}^\eps [f]\rho_i 
	+ c_{ij}^\eps[f]\rho_j} \nonumber \\ 
  &\phantom{xx}{}+ \frac{c_{ji}^\eps[f] m_i\rho_i c_{ij}^\eps[f] m_j\rho_j |u_i^\eps[f] 
	- u_j^\eps[f]|^2}{3(c_{ji}^\eps[f]\rho_i + c_{ij}^\eps[f]\rho_j)(c_{ji}^\eps[f] m_i\rho_i 
	+ c_{ij}^\eps[f] m_j\rho_j)}, \nonumber
\end{align}
and the (truncated) moments are defined according to
\begin{align*}
  n_i &= \int_{\R^3}f_i^0\dv, \quad \rho_i = m_in_i, \\
  u_i^\eps[f] &= \frac{1}{n_i}\int_{B_M}\min\bigg\{f_i^+,\frac{g(v)}{\eps}\bigg\}v\dv, \\
	T^{\eps,\uparrow}_j[f] &= \frac{m_i}{3n_i}\int_{B_M}\min\bigg\{f_i^+,\frac{g(v)}{\eps}\bigg\}
	|v-u_i^\eps[f]|^2\dv, \\
	T^{\eps,\downarrow}_j[f] &= \frac{m_i}{3n_i}\int_{B_M}\max\big\{f_i,\eps g(v)\big\}
	|v-u_i^\eps[f]|^2\dv.
\end{align*}
Note that $n_i$ is given by the initial datum $f_i^0$ because of mass conservation. 
The truncations guarantee that for all $f_1,\ldots,f_s\in L^1(\R^3;\vv^2\dv)$, the integrals
$u_i^\eps[f]$, $T^{\eps,\uparrow}_j[f]$, and $T^{\eps,\downarrow}_j[f]$ are well defined and
\begin{equation}\label{2.cuT}
  \eps\le c_{ji}^\eps[f]\le C(\eps), \quad |u_{ji}^\eps[f]|\le C(\eps), \quad
	c\eps\le T_{ji}^\eps[f]<\infty
\end{equation}
for some constants $c>0$ and $C(\eps)>0$ which are independent of $M$.

\subsection{Existence of solutions to the approximated system}

We show that there exists a weak solution $f_i$ to \eqref{1.ic}, \eqref{2.approx},
and \eqref{1.bc} by reformulating
the equations as a fixed-point problem for a suitable mapping. For this, we introduce
the space $X=L^p(0,T;L^p(B_M))$ recalling that $p>3$. 
Let $\sigma\in[0,1]$ and $\widehat f_i\in X$, $i=1,\ldots,s$, be given.
We consider first the partially linearized equations
\begin{align}\label{2.lin}
  \pa_t f_i	&+\delta\bigg(\vv^K f_i - \sigma g(v)\int_{B_M}\vv^K\widehat{f}_i^+ \dv\bigg)
	+ \delta|f_i|^{p-2}f_i - \delta\diver\big(|\na f_i|^{p-2}\na f_i\big) \\ 
	&{}- \sigma\sum_{j=1}^s c_{ji}^\eps[\widehat{f}]\diver\bigg(\na f_i 
	+ \frac{m_if_i}{T_{ji}^\eps[\widehat{f}]}(v-u^\eps_{ji}[\widehat{f}])\bigg)
	= \sigma\delta |\widehat{f}_i|^{p-2}\widehat{f}_i, \nonumber
\end{align}
where $i=1,\ldots,s$, with initial and no-flux boundary conditions. 
This system can be formulated as the evolution equation
$\pa_t f_i + A[\widehat{f}]f_i=b_i[\widehat{f}]$ for $t>0$, where
\begin{align*}
  A[\widehat{f}]f_i &= \delta\vv^K f_i + \delta|f_i|^{p-2}f_i 
	- \delta\diver\big(|\na f_i|^{p-2}\na f_i\big) \\ 
	&\phantom{xx}{}- \sigma\sum_{j=1}^s c_{ji}^\eps[\widehat{f}]
	\diver\bigg(\na f_i + \frac{m_if_i}{T_{ji}^\eps[\widehat{f}]}
	(v-u^\eps_{ji}[\widehat{f}])\bigg), \\
	b_i &= \sigma g(v)\int_{B_M}\vv^K\widehat{f}_i^+ \dv
	+ \sigma\delta |\widehat{f}_i|^{p-2}\widehat{f}_i.
\end{align*}
The operator $A[\widehat{f}]:V\to V'$ with $V=W^{1,p}(B_M)$ and its dual space $V'$ is monotone,
hemicontinuous, and coercive. We conclude from \cite[Theorem 30.A]{Zei90} that \eqref{2.lin}
possesses a unique solution $f_i\in L^p(0,T;V)$ with $\pa_t f_i\in L^{p/(p-1)}(0,T;V')$,
$i=1,\ldots,s$.

Next, we use the test function $f_i$ in the weak formulation of \eqref{2.lin}:
\begin{align}\label{2.aux2}
  \frac12&\int_{B_M}f_i(t)^2\dv - \frac12\int_{B_M}(f_i^0)^2\dv
	+ \delta\int_0^t\int_{B_M}|f_i|^p\dv\ds 
	+ \delta\int_0^t\int_{B_M}|\na f_i|^p\dv\ds \\
  &= -\delta\int_0^t\int_{B_M}\vv^Kf_i^2\dv\ds 
	+ \sigma\int_0^t\bigg(\int_{B_M}f_i g(v)\dv\bigg)\bigg(\int_{B_M}\vv^K\widehat{f}_i^+ 
	\dv\bigg)\ds \nonumber \\
	&\phantom{xx}{}- \sigma\sum_{j=1}^n\int_0^t\int_{B_M}c_{ji}^\eps[\widehat{f}]
	\bigg(|\na f_i|^2 + \frac{m_i f_i}{T_{ji}^\eps[\widehat{f}]}(v-u_{ji}^\eps[\widehat{f}])
	\cdot\na f_i\bigg)\dv\ds \nonumber \\
	&\phantom{xx}{}+ \sigma\delta\int_0^t\int_{B_M}|\widehat{f}_i|^{p}\dv\ds. \nonumber 
\end{align}
Taking into account that we integrate over a bounded domain, and in particular that 
$\vv^K$ is bounded, we estimate the second term on the right-hand side as follows,
using H\"older's inequality as well as the embeddings
$L^p(B_M)\hookrightarrow L^1(B_M)$ and $L^p(B_M)\hookrightarrow L^{p/(p-1)}(B_M)$:
\begin{align*}
  \sigma\int_0^t&\bigg(\int_{B_M}f_i g(v)\dv\bigg)
	\bigg(\int_{B_M}\vv^K\widehat{f}_i^+ f_i\dv\bigg)\ds
	\le \int_0^t\|f_i\|_{L^1(B_M)}\|\widehat{f}_i\|_{L^p(B_M)}
	\|f_i\|_{L^{p/(p-1)}(B_M)}\ds \\
	&\le \int_0^t\|f_i\|_{L^1(B_M)}^{p/(p-1)}\|f_i\|_{L^{p/(p-1)}(B_M)}^{p/(p-1)}\ds
	+ C\int_0^t\|\widehat{f}_i\|_{L^p(B_M)}^p\ds \\
	&\le C(M)\int_0^t\|f_i\|_{L^p(B_M)}^{2p/(p-1)}\ds 
	+ C\int_0^t\|\widehat{f}_i\|_{L^p(B_M)}^p\ds.
\end{align*}
Since $2p/(p-1)<p$ (because of $p>3$), the elementary inequality
$z^{2p/(p-1)}\le C(\delta) + (\delta/2) z^p$ for $z\ge 0$ yields
\begin{align*}
  \sigma\int_0^t&\bigg(\int_{B_M}f_i g(v)\dv\bigg)\bigg(\int_{B_M}\vv^K\widehat{f}_i^+ f_i\dv
	\bigg)\ds	\\
	&\le \int_0^t\bigg(C(\delta) + \frac{\delta}{2}\|f_i\|_{L^p(B_M)}^p	
	+ C\|\widehat{f}_i\|_{L^p(B_M)}^p\bigg)\ds,
\end{align*}
and the second term on the right-hand side can be absorbed by the left-hand side of
\eqref{2.aux2}. We write $f_i\na f_i=\frac12\na f_i^2$, use Young's inequality, 
and integrate by parts
in the third term on the right-hand side of \eqref{2.aux2}
(we denote the measure on $\pa B_M$ with $d\Sigma_v$):
\begin{align*}
  -\sigma\sum_{j=1}^n&\int_0^t\int_{B_M}c_{ji}^\eps[\widehat{f}]
	\bigg(|\na f_i|^2 + \frac{m_i f_i}{T_{ji}^\eps[\widehat{f}]}(v-u_{ji}^\eps[\widehat{f}])
	\cdot\na f_i\bigg)\dv\ds \\
	&\le -\frac{\sigma}{2}\sum_{i,j=1}^s\int_0^t\int_{B_M}
	c_{ji}^\eps[\widehat{f}]|\na f_i|^2\dv\ds
		+\frac{\sigma}{2}\sum_{i,j=1}^s \int_0^t\int_{B_M}c_{ji}^\eps[\widehat{f}]
	\frac{m_i^2|u_{ji}^\eps[\widehat{f}]|^2}{T_{ji}^\eps[\widehat{f}]^2}
	f_i^2 \dv\ds \\
	&\phantom{xx}{}-\frac{\sigma}{2}\sum_{i,j=1}^s\int_0^t\int_{\pa B_M}
	c_{ji}^\eps[\widehat{f}]\frac{m_i}{T_{ji}^\eps[\widehat{f}]}|v|f_i^2 \mathrm{d}\Sigma_v\ds 
	+ 3\sum_{i,j=1}^s\int_0^t\int_{B_M}c_{ji}^\eps[\widehat{f}]
	\frac{m_i}{T_{ji}^\eps[\widehat{f}]}f_i^2\dv\ds \\
	&\le C(\eps)\int_0^t\int_{B_M}f_i^2\dv\ds,
\end{align*}
using $v\cdot\nu=|v|$ and bounds \eqref{2.cuT} in the last step. Then \eqref{2.aux2} gives
\begin{align*}
  \int_{B_M}f_i(t)^2\dv + \delta\int_0^t\|f_i\|_{W^{1,p}(B_M)}^p\ds
	\le C(\delta) + C(\eps)\int_0^t\int_{B_M}f_i^2\dv\ds
	+ C\int_0^t\|\widehat{f}_i\|_{L^p(B_M)}^p\ds,
\end{align*}
and it follows from Gronwall's inequality that, for any $T>0$,
\begin{equation}\label{2.aux}
  \sup_{0<t<T}\|f_i\|_{L^2(B_M)}^2 + \int_0^T\|f_i\|_{W^{1,p}(B_M)}^p\mathrm{d}t
	\le C\big(\delta,\eps,T,\|f^0\|_{L^2(\R^3)}\big)
	\bigg(1+\int_0^T\|\widehat{f}_i\|_{L^p(B_M)}^p\mathrm{d}t\bigg).
\end{equation}
This estimate allows us to derive a bound for the time derivative,
\begin{align}\label{2.aux.2}
  \|\pa_t f_i\|_{L^{p/(p-1)}(0,T;W^{1,p}(B_M)')}\le C(\delta,\eps,T,f^0)
  \bigg(1+\int_0^T\|\widehat{f}_i\|_{L^p(B_M)}^p\mathrm{d}t\bigg).
\end{align}
Estimate \eqref{2.aux} shows that the mapping $F:X\times[0,1]\to X$, 
$(\widehat{f},\sigma)\mapsto f$, is well defined. 
Moreover, the function $F(\cdot,0):X\to X$ is constant. 


\textcolor{black}{
The (sequential) continuity of $F$ is shown as follows. Let $(\widehat{f}^{(n)},\sigma^{(n)})_{n\in\N}\subset X\times [0,1]$ be a sequence such that $\sigma^{(n)}\to \sigma$, $\widehat f^{(n)}\to \widehat f$ in $X$. Let $f^{(n)}=F(\widehat f^{(n)},\sigma^{(n)})$, $f = F(\widehat{f},\sigma)$. We show that $f^{(n)}\to f$ in $X$. From \eqref{2.aux}, \eqref{2.aux.2} it follows that $f^{(n)}$ is bounded in $L^p(0,T; W^{1,p}(B_M))$ and $\pa_t f^{(n)}$ is bounded in $L^{p/(p-1)}(0,T;W^{1,p}(B_M)')$, hence by Aubin-Lions Lemma it follows that $f^{(n)}$ is strongly convergent in $X = L^p(0,T; L^p(\R^3))$ up to subsequences. Taking the limit $n\to\infty$ in $\pa_t f_i^{(n)} + A[\widehat{f}^{(n)}]f_i^{(n)} = b_i[\widehat{f}^{(n)}]$ and exploiting the bounds \eqref{2.aux}, \eqref{2.aux.2} one shows that the limit $g$ of (a subsequence of) $f^{(n)}$ satisfies $\pa_t g + A[\widehat{f}]g = b_i[\widehat{f}]$. The uniqueness of the solution to \eqref{2.lin} yields $g=f$, hence the continuity of $F$ is proved.
}

The compactness of $F$ follows from the compact embedding
$W^{1,p}(B_M)\hookrightarrow L^p(B_M)$, the bounds for $f_i$ in $L^p(0,T;W^{1,p}(B_M))$
and $W^{1,p/(p-1)}(0,T;W^{1,p}(B_M)')$, and the Aubin--Lions lemma \cite{Sim87}.

To apply the Leray--Schauder fixed-point theorem, we need to show that the set
$\{f\in X:F(f,\sigma)=f\}$ of fixed points of $F(\cdot,\sigma)$ is bounded in $X$
uniformly in $\sigma\in[0,1]$. To this end, we set $\widehat{f}=f$ in \eqref{2.lin},
use the test function $f_i$ in its weak formulation, and estimate similarly as above:
\begin{align*}
  \frac12&\int_{B_M}f_i^2(t)\dv - \frac12\int_{B_M}(f_i^0)^2\dv
	+ \delta(1-\sigma)\int_0^t\int_{B_M}|f_i|^p\dv\ds
	+ \delta\int_0^t\int_{B_M}|\na f_i|^p\dv\ds \\
	&\le -\int_0^t\int_{B_M}\vv^K f_i^2\dv\ds
	+ \int_0^t\bigg(\int_{B_M}f_ig(v)\dv\bigg)\bigg(\int_{B_M}\vv^K f_i^+\dv\bigg)\ds \\
	&\phantom{xx}{}- \eps\int_0^t\int_{B_M}|\na f_i|^2\dv\ds
	+ C(\eps,M)\int_0^t\int_{B_M}f_i^2\dv\ds
	\le C(\eps,M)\int_0^t\int_{B_M}f_i^2\dv\ds,
\end{align*}
where we used the inequality $(\int_{B_M}f_i\dv)^2\le C(M)\int_{B_M}f_i^2\dv$.
We deduce from Gronwall's inequality and the Poincar\'e--Wirtinger inequality
that $f_i$ is bounded in $L^p(0,T;W^{1,p}(B_M))$
uniformly in $\sigma\in[0,1]$. Therefore, we can apply the Leray--Schauder fixed-point
theorem to infer the existence of a fixed point to \eqref{2.lin} with $\sigma=1$, i.e.\ a
solution $f_i\in L^p(0,T;L^p(B_M))$, $i=1,\ldots,s$, to \eqref{2.approx}.


\subsection{Limit $M\to\infty$}

Let $f_i^M:=f_i$ be a weak solution to \eqref{2.approx}.
We first derive some estimates uniform in $M$ and then pass to the limit $M\to\infty$.

\begin{lemma}\label{lem.mass}
The solution $f_i^M$ to \eqref{2.approx}, constructed in the previous subsection, is
nonnegative in $B_M\times(0,T)$, and the mass is controlled, $\|f_i^M(t)\|_{L^1(B_M)}
\le \|f_i^0\|_{L^1(B_M)}$ for $t>0$. 
\end{lemma}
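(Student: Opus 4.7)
The plan is to obtain nonnegativity by testing \eqref{2.approx} against $-f_i^-$ (writing $f_i = f_i^+ - f_i^-$), and to get the $L^1$ bound by integrating \eqref{2.approx} over $B_M$, in each case relying crucially on the no-flux boundary condition \eqref{1.bc} to kill the surface contributions.

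For nonnegativity, I would multiply \eqref{2.approx} by $\phi=-f_i^-$ and integrate on $B_M$. Since $\phi\equiv 0$ and $\nabla\phi\equiv 0$ on $\{f_i\ge 0\}$, while $\phi=f_i$ and $\nabla\phi=\nabla f_i$ on $\{f_i<0\}$, the time derivative contributes $\tfrac12\tfrac{d}{dt}\|f_i^-\|_{L^2(B_M)}^2$. The linear terms coming from $E_2$ give $\delta\int\vv^K(f_i^-)^2\dv$ and $\delta(\int g\,f_i^-\dv)(\int\vv^K f_i^+\dv)$, both nonnegative. Integrating by parts the two divergence terms, the boundary contributions combine into $-\int_{\pa B_M}\text{(total flux)}\cdot\nu\,\phi\,d\Sigma_v$, which vanishes by \eqref{1.bc}; the bulk contributions are the nonnegative terms $\delta\int|\nabla f_i^-|^p\dv$ and $\sum_j\int c_{ji}^\eps|\nabla f_i^-|^2\dv$, plus the drift contribution $\sum_j\int c_{ji}^\eps\frac{m_i}{T_{ji}^\eps}(v-u_{ji}^\eps)\cdot\tfrac12\nabla(f_i^-)^2\dv$. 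A further integration by parts turns the drift into $-\tfrac32\sum_j c_{ji}^\eps\frac{m_i}{T_{ji}^\eps}\|f_i^-\|_{L^2(B_M)}^2$ plus a boundary term whose sign is nonnegative once $M>\sup|u_{ji}^\eps|$ (guaranteed by \eqref{2.cuT}), so the boundary term may be discarded. Using the uniform bounds \eqref{2.cuT} we arrive at
$$
 \frac{d}{dt}\|f_i^-\|_{L^2(B_M)}^2 \le C(\eps)\,\|f_i^-\|_{L^2(B_M)}^2 .
$$
Since $f_i^0\ge 0$ gives $f_i^-(0)\equiv 0$, Gronwall's inequality yields $f_i^-\equiv 0$, i.e.\ $f_i\ge 0$ a.e.\ in $B_M\times(0,T)$.

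For the mass bound, I would integrate \eqref{2.approx} over $B_M$ with test function $1$. The two divergence terms together produce the boundary integral of the total flux, which is zero by \eqref{1.bc}, leaving
$$
 \frac{d}{dt}\int_{B_M} f_i\,\dv
 + \delta\int_{B_M}\vv^K f_i\,\dv
 - \delta\Big(\int_{B_M}g\,\dv\Big)\Big(\int_{B_M}\vv^K f_i^+\,\dv\Big) = 0.
$$
By the nonnegativity just established, $f_i^+=f_i$, and since $\int_{B_M}g\,\dv\le\int_{\R^3}g\,\dv=1$, the right-hand side of the resulting ODE equals $-\delta\bigl(1-\int_{B_M}g\,\dv\bigr)\int_{B_M}\vv^K f_i\,\dv\le 0$. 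Hence $t\mapsto\|f_i^M(t)\|_{L^1(B_M)}$ is nonincreasing, proving the claimed bound.

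The only subtle point is the drift integration by parts in the nonnegativity argument: it a priori generates an indefinite-sign term, but the uniform lower bound on $T_{ji}^\eps$ and upper bound on $|u_{ji}^\eps|$ from \eqref{2.cuT} tame the bulk part into a Gronwall-type term, while the surface part has the good sign for $M$ large. Everything else is standard manipulation of the energy identity combined with the no-flux boundary condition.
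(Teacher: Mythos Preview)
Your proof is correct and follows essentially the same approach as the paper: test with the negative part and apply Gronwall for nonnegativity, then test with $\phi=1$ and use $\int_{B_M}g\le 1$ for the mass bound. The only minor tactical difference is that the paper handles the $u_{ji}^\eps$-portion of the drift via Young's inequality (so no size restriction on $M$ is needed), whereas you integrate by parts on the full drift and discard the boundary term under the assumption $M>\sup|u_{ji}^\eps|$; this is harmless since only large $M$ is relevant for the subsequent limit.
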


\begin{proof}
We use the test function $(f_i^M)^-=\min\{0,f_i^M\}$ in the weak formulation
of \eqref{2.approx}, use $(f_i^0)^-=0$, and integrate by parts in the collision operator:
\begin{align*}
  \frac12&\int_{B_M}(f_i^M)^-(t)^2\dv 
	+ \delta\int_0^t\int_{B_M}|\na (f_i^M)^-|^p\dv\ds \\
	&= \sum_{j=1}^s\int_0^t\int_{B_M}c_{ji}[f^M]\bigg(
	- \frac{1}{2}|\na (f_i^M)^-|^2
	+ \frac32\frac{m_i}{T_{ji}[f^M]}|(f_i^M)^-|^2\bigg)\dv\ds \\
	&\phantom{xx}
		+\frac{1}{2}\sum_{i,j=1}^s \int_0^t\int_{B_M}c_{ji}^\eps[f^M]
		\frac{m_i^2|u_{ji}^\eps[f^M]|^2}{T_{ji}^\eps[f^M]^2}
		|(f_i^M)^-|^2 \dv\ds \\
	&\phantom{xx}
		-\frac{1}{2}\sum_{i,j=1}^s\int_0^t\int_{\pa B_M}c_{ji}^\eps[f^M]
		\frac{m_i}{T_{ji}^\eps[f^M]}|v| |(f_i^M)^-|^2 \mathrm{d}\Sigma_v\ds \\
	&\phantom{xx}{}- \delta\int_0^t\int_{B_M}\vv^K |(f_i^M)^-|^2\dv\ds \\
	&\phantom{xx}{}+ \delta\int_0^t\bigg(\int_{B_M}(f_i^M)^-g(v)\dv\bigg)
	\bigg(\int_{B_M}\vv^K(f_i^M)^+\dv\bigg)\ds \\
	&\le C(\eps)\int_0^t\int_{B_M}|(f_i^M)^-|^2\dv\ds,
\end{align*}
since the last term in the last but one step is nonpositive.
We conclude from Gronwall's lemma that $(f_i^M)^-(t)=0$ and hence $f_i^M(t)\ge 0$
in $B_M$ for $t>0$.
Next, we use the test function $\phi=1$ in the weak formulation of \eqref{2.approx}:
\begin{align*}
  \int_{B_M}f_i^M(t)\dv &= \int_{B_M}f_i^0\dv - \delta\int_0^t\int_{B_M}\vv^K f_i^M\dv\ds \\
	&\phantom{xx}{}+ \delta\bigg(\int_0^t\int_{B_M}g(v)\dv\bigg)
	\bigg(\int_{B_M}\vv^K f_i^M\dv\bigg)\ds	\le \int_{B_M}f_i^0\dv,
\end{align*}
since $\int_{B_M}g(v)\dv\le\int_{\R^3}g(v)\dv=1$. This proves the mass control.
\end{proof}

We show now some estimates uniform in $M$.

\begin{lemma}\label{lem.unifM}
Let $0<\theta<1-3/p$. Then there exists a constant $C(\delta,\eps)>0$ 
independent of $M$ such that
\begin{align*}
  \sup_{0<t<T}&\int_{B_M}\big(f_i^M(t)^2+\vv^\theta f_i^M(t)\big)\dv
	+ \int_0^T\int_{B_M}\vv^{K+\theta}f_i^M\dv\ds \\
	&{}+ \int_0^T\int_{B_M}\big(|\na f_i^M|^2 + |\na f_i^M|^p\big)\dv\ds \le C(\delta,\eps).
\end{align*}
\end{lemma}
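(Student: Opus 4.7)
The plan is to derive two coupled energy estimates by testing \eqref{2.approx} against $f_i$ and against a (cutoff version of) $\vv^\theta$, and then combine them and apply Gronwall's inequality. The first test gives $L^\infty_tL^2_v$, $L^2_tH^1_v$, $L^p_tW^{1,p}_v$ and weighted $L^2$ bounds, but leaves an uncontrolled first-moment term $C\delta\int\vv^K f_i$ on the right-hand side; the second test is precisely designed to absorb this term by producing the higher-moment quantity $\delta\int\vv^{K+\theta}f_i$ on the left-hand side. Mass conservation from Lemma \ref{lem.mass} keeps $n_i$ bounded independently of $M$, which is essential for making all constants $M$-independent.

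For the first test I take $\phi=f_i$ in the weak formulation of \eqref{2.approx}. The no-flux condition \eqref{1.bc} wipes out the boundary terms from integration by parts on the \emph{total} flux. After integrating by parts, the cross term $c_{ji}^\eps[f]\int (m_i f_i/T_{ji}^\eps)(v-u_{ji}^\eps)\cdot\nabla f_i\dv$ is split via Young's inequality into $\tfrac12 c_{ji}^\eps\int|\nabla f_i|^2$ (absorbed) plus $C(\eps)\int (1+|v|^2)f_i^2$; the latter is split with a large radius $R$ so that the tail $|v|>R$ is absorbed into $\delta\int\vv^K f_i^2$ (this uses $K>2$) and the core gives $C(\delta,\eps)\int f_i^2$. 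The source term $\delta(\int fg)(\int\vv^K f_i^+)$ is bounded, using $\int g\dv \leq 1$ and $n_i$ uniform in $M$, by $C\delta\int\vv^Kf_i$. The resulting inequality is
\begin{align*}
\tfrac12\tfrac{d}{dt}\int_{B_M} f_i^2\dv + \tfrac{\delta}{2}\int\vv^Kf_i^2\dv + \delta\int|\nabla f_i|^p\dv + \eps\int|\nabla f_i|^2\dv \le C(\delta,\eps)\int f_i^2\dv + C\delta\int\vv^Kf_i\dv.
\end{align*}

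For the second test I take $\phi=\vv^\theta$ (justified rigorously via the cutoff procedure of Appendix \ref{sec.app}; on $B_M$ itself $\vv^\theta\in W^{1,p}(B_M)$, and the cutoff is only needed to guarantee $M$-independent control of the remainder). Using $|\nabla\vv^\theta|\le\theta\vv^{\theta-1}$, the drift contribution is $\leq C(\eps)\int\vv^\theta f_i\,dv$, which will be absorbed by Gronwall. The $p$-Laplace cross term is estimated by Young's inequality as $\delta\int|\nabla f_i|^{p-1}\vv^{\theta-1}\dv \le \eta\delta\int|\nabla f_i|^p\dv + C(\eta,\delta)\int_{\R^3}\vv^{p(\theta-1)}\dv$; here the choice $\theta<1-3/p$ is precisely what makes $p(\theta-1)<-3$, so $\int_{\R^3}\vv^{p(\theta-1)}\dv<\infty$ uniformly in $M$. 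The collision gradient term $c_{ji}^\eps\int\nabla f_i\cdot\nabla\vv^\theta\dv$ is Cauchy--Schwarz'd against the $\eps\int|\nabla f_i|^2$ good term (using again that $\int_{\R^3}\vv^{2(\theta-1)}$ can be replaced by $\int\vv^{p(\theta-1)}$ combined with the $L^p$ gradient bound, or handled by another Young split). Finally, $\delta\int\vv^Kf_i$ is handled by the tail split $\int\vv^Kf_i\le R^K n_i + R^{-\theta}\int\vv^{K+\theta}f_i$, choosing $R$ large so the second piece is absorbed into a fraction of the $\delta\int\vv^{K+\theta}f_i$ good term. Adding a sufficiently large multiple of this moment estimate to the first one and applying Gronwall's inequality then gives the pointwise-in-time bound on $\int (f_i^2+\vv^\theta f_i)\dv$, and integrating the differential inequality over $(0,T)$ delivers the remaining spacetime bounds stated in the lemma.

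The main obstacle I expect is the tight matching between the $p$-Laplace regularization and the moment test: the term $\delta\int|\nabla f_i|^{p-2}\nabla f_i\cdot\nabla\vv^\theta$ that appears when testing with $\vv^\theta$ can only be controlled, uniformly in $M$, by Young's inequality paired with $\int_{\R^3}\vv^{p(\theta-1)}\dv<\infty$, and this is exactly why the parameter constraint $\theta<1-3/p$ (hence the preliminary choice $p>3$) is imposed from the outset. A secondary, purely technical obstacle is legitimizing $\vv^\theta$ as a test function — although $B_M$ is bounded so $\vv^\theta\in W^{1,p}(B_M)$, one must be careful that the implicit cutoff does not introduce $M$-dependent constants, which is the role of Appendix \ref{sec.app}.
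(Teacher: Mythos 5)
Your overall plan mirrors the paper's: test \eqref{2.approx} with $f_i^M$ and with (a cutoff version of) $\vv^\theta$, combine, and close with Gronwall, with mass control from Lemma~\ref{lem.mass} keeping the constants $M$-independent. You correctly identify why $0<\theta<1-3/p$ is imposed: it tames the $p$-Laplace cross term $\delta\int\vv^{\theta-1}|\nabla f_i^M|^{p-1}\dv$ via Young's inequality and the finiteness of $\int_{\R^3}\vv^{p(\theta-1)}\dv$. Your variant of the $L^2$ estimate (tail-splitting $(1+|v|^2)f_i^2$ and absorbing the tail into $\delta\int\vv^K(f_i^M)^2$, instead of the paper's integration by parts using $\operatorname{div} v=3$) is also a valid alternative.

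However, your treatment of the collision gradient term $\sum_j c_{ji}^\eps\int_{B_M}\nabla f_i^M\cdot\nabla\vv^\theta\dv$ in the moment estimate has a genuine gap. You propose Cauchy--Schwarz against $\eps\int|\nabla f_i^M|^2$, or a Young split putting the weight into $L^p$; neither closes uniformly in $M$. Cauchy--Schwarz produces the weight $\int\vv^{2(\theta-1)}\dv$, which diverges since $2(\theta-1)>-3$ for every $\theta>-1/2$ (in particular for $\theta>0$). Young's inequality against the $\delta\int|\nabla f_i^M|^p$ good term, with exponents $(p,p/(p-1))$, produces $\int\vv^{(\theta-1)p/(p-1)}\dv$; but for $p>3$ and $\theta<1-3/p$ one has $(\theta-1)p/(p-1)>-3/(p-1)>-3$, so this integral also diverges. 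The only split that makes the weight integrable would be to place exponent $p$ on $\vv^{\theta-1}$ and $p/(p-1)<2$ on $|\nabla f_i^M|$, but no $L^{p/(p-1)}$ bound on the gradient is available. The paper's device, which you are missing, is a \emph{second} integration by parts on exactly this term:
\begin{align*}
  -\sum_{j=1}^s\int_{B_M}c_{ji}^\eps\,\nabla\vv^\theta\cdot\nabla f_i^M\dv
  = \sum_{j=1}^s\int_{B_M}c_{ji}^\eps\,\Delta\vv^\theta\, f_i^M\dv
  - \sum_{j=1}^s\int_{\partial B_M}c_{ji}^\eps\,(\nabla\vv^\theta\cdot\nu)\, f_i^M\,\mathrm{d}\Sigma_v.
\end{align*}
The surface term is nonpositive because $\nabla\vv^\theta\cdot\nu=\theta\vv^{\theta-2}|v|\ge 0$, $c_{ji}^\eps\ge 0$, and $f_i^M\ge 0$ by Lemma~\ref{lem.mass}; the volume term is controlled by $|\Delta\vv^\theta|\le C\vv^{\theta-2}\le C$ together with mass control, giving a bound $C(\eps,f_i^0)$ uniform in $M$. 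Without this step the moment inequality does not close, and the chain of estimates in your proposal breaks down at that point.
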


\begin{proof}
We use the test function $f_i^M$ in the weak formulation of \eqref{2.approx}, use
$\eps\le c_{ji}[f^M]\le C(\eps)$, and integrate by parts
in the drift part of the collision operator:
\begin{align*}
  \frac12&\int_{B_M}f_i^M(t)^2\dv - \frac12\int_{B_M}(f_i^0)^2\dv
	+ \delta\int_0^t\int_{B_M}\vv^K (f_i^M)^2\dv\ds
	+ \delta\int_0^t\int_{B_M}|\na f_i^M|^p\dv\ds \\
	&\le \delta\int_0^t\bigg(\int_{B_M}f_i^M g(v)\dv\bigg)\bigg(\int_{B_M}\vv^K f_i^M\dv\bigg)\ds
	- \eps\int_0^t\int_{B_M}|\na f_i^M|^2\dv\ds \\
	&\phantom{xx}{}+ C(\eps)\int_0^t\int_{B_M}(f_i^M)^2\dv\ds.
\end{align*}
Because of the mass control from Lemma \ref{lem.mass},
$\int_{B_M}f_i^M g(v)\dv\le \int_{B_M}f_i^M\dv\le C(f_i^0)$. Hence,
\begin{align}
  \frac12&\int_{B_M}f_i^M(t)^2\dv + \delta\int_0^t\int_{B_M}\vv^K (f_i^M)^2\dv\ds
	+ \delta\int_0^t\int_{B_M}|\na f_i^M|^p\dv\ds	\label{3.aux0} \\
	&\phantom{xx}{}+ \eps\int_0^t\int_{B_M}|\na f_i^M|^2\dv\ds \nonumber \\
	&\le C + C(f_i^0)\int_0^t\int_{B_M}\vv^K f_i^M\dv\ds
	+ C(\eps)\int_0^t\int_{B_M}(f_i^M)^2\dv\ds. \nonumber
\end{align}

To control the second term on the right-hand side, 
we derive a bound for $\vv^{K+\theta} f_i^M$ for some $\theta>0$.
This is done by using the test function $\vv^\theta$ in \eqref{2.approx}:
\begin{align}\label{3.aux}
  \int_{B_M}&\vv^\theta f_i^M(t)\dv - \int_{B_M}\vv^\theta f_i^0\dv
	+ \delta\int_0^t\int_{B_M}\vv^{K+\theta} f_i^M\dv \\
	&\le C(g)\int_0^t\int_{B_M}\vv^K f_i^M\dv\ds 
	+ \delta C\int_0^t\int_{B_M}\vv^{\theta-2}|\na f_i^M|^{p-2}|\na f_i^M\cdot v|\dv\ds 
	\nonumber \\
	&\phantom{xx}{}- \theta\sum_{j=1}^s\int_0^t\int_{B_M} c_{ji}^\eps[f^M]\vv^{\theta-2}v
	\cdot\bigg(\na f_i^M + \frac{m_i f_i^M}{T_{ji}[f^M]}(v-u_{ji}[f^M])\bigg)\dv\ds \nonumber \\
	&=: I_1+I_2+I_3, \nonumber
\end{align}
where $C(g)>0$ depends on the integral $\int_{B_M}\vv^\theta g(v)\dv$ which is bounded
uniformly in $M$. The first term is estimated according to
\begin{align*}
  I_1 &\le \int_0^t\int_{B_M}\bigg(\frac{\delta}{4}\vv^{K+\theta}
	+ C(\delta,g,K)\bigg)f_i^M\dv\ds \\
	&\le \frac{\delta}{4}\int_0^t\int_{B_M}\vv^{K+\theta}f_i^M\dv\ds + C(\delta,g,K,f_i^0),
\end{align*}
and the integral on the right-hand side can be absorbed by the left-hand side of \eqref{3.aux}.
We use Young's inequality with exponents $p$ and $p/(p-1)$ to find that
\begin{align*}
  I_2 &\le \delta C\int_0^t\int_{B_M}\vv^{\theta-1}|\na f_i^\eps|^{p-1}\dv\ds \\
	&\le \frac{\delta}{2}\int_0^t\int_{B_M}|\na f_i^M|^{p}\dv\ds
	+ C\delta\int_0^t\int_{B_M}\vv^{p(\theta-1)}\dv\ds.
\end{align*}
The integral over $\vv^{p(\theta-1)}$ is bounded uniformly in $M$
if $p(\theta-1)<-3$, which is equivalent to $\theta<1-3/p$. 
We integrate by parts in the first part of $I_3$:
\begin{align*}
  -\sum_{j=1}^s\int_0^t\int_{B_M}c_{ji}^\eps[f^M]\vv^{\theta-2}v\cdot\na f_i^M\dv\ds
	&= \sum_{j=1}^s\int_0^t\int_{B_M}c_{ji}^\eps[f^M]\diver(\vv^{\theta-2}v)f_i^M\dv\ds \\
	&\phantom{xx}{}
	- \sum_{j=1}^s\int_0^t\int_{\pa B_M}c_{ji}^\eps[f^M]\vv^{\theta-2}(v\cdot\nu)f_i^M\dv\ds,
\end{align*}
where $\nu$ is the exterior unit normal vector to $\pa B_M$. Since $B_M$ is a ball around
the origin, $\nu=v/|v|$ and hence $v\cdot\nu=|v|$, and 
we infer that the surface integral is nonpositive. 
Then, using $\vv^{\theta-2}\le 1$ and the mass control,
\begin{align*}
  -\theta\sum_{j=1}^s\int_0^t\int_{B_M}c_{ji}^\eps[f^M]\vv^{\theta-2}v\cdot\na f_i^M\dv\ds
	&\le C(\eps)\sum_{j=1}^s\int_0^t\int_{B_M}\vv^{\theta-2}f_i^M\dv\ds 
	\le C(\eps,f_i^0).
\end{align*}
The second part of $I_3$ is estimated according to
\begin{align*}
  \theta&\sum_{j=1}^s\int_0^t\int_{B_M}c_{ji}^\eps[f^M]\frac{m_i}{T_{ji}[f^M]}
	\vv^{\theta-2}\big(|v|^2-v\cdot u_{ji}^\eps[f^M]\big)f_i^M\dv\ds \\
	&\le C(\eps)\int_0^t\int_{B_M}(\vv^\theta + \vv^{\theta-1})f_i^M\dv\ds 
	\le C(\delta,\eps) + \frac{\delta}{4}\int_0^t\int_{B_M}\vv^{K+\theta}f_i^M\dv\ds.
\end{align*}

Summarizing, we infer from \eqref{3.aux} that 
$$
   \int_{B_M}\vv^\theta f_i^M(t)\dv 
	+ \frac{\delta}{2}\int_0^t\int_{B_M}\vv^{K+\theta} f_i^M\dv
	\le C(\delta,\eps) + \frac{\delta}{2}\int_0^t\int_{B_M}|\na f_i^M|^{p}\dv\ds.
$$
We add this inequality to \eqref{3.aux0} and use the inequality $\vv^K\le C(\delta) 
+ (\delta/8)\vv^{K+\theta}$ as well as the mass control:
\begin{align*}
  \int_{B_M}\bigg(\frac12 f_i^M(t)^2 + \vv^\theta f_i^M(t)\bigg)\dv 
	&+ \int_0^t\int_{B_M}\bigg(\frac{\delta}{2}\vv^{K+\theta}f_i^M + \eps|\na f_i^M|^2 
	+ \frac{\delta}{2}|\na f_i^M|^p\bigg)\dv\ds \\
	&\le C(\delta,\eps) + C(\eps)\int_0^t\int_{B_M}(f_i^M)^2\dv\ds.
\end{align*}
We apply Gronwall's lemma and then take the
supremum over $t\in(0,T)$ to finish the proof.
\end{proof}

Lemma \ref{lem.unifM} gives uniform bounds for $f_i^M$ in 
$L^\infty(0,T;L^2(B_M))$ and $L^p(0,T;W^{1,p}(B_M))$. Then, together with the
bounds \eqref{2.cuT}, we infer that
$\pa_t f_i^M$ is bounded in $L^{p/(p-1)}(0,T;$ $W^{-1,p}(B_M)')$ uniformly in $M$. The condition
$p>3$ implies that the embedding $W^{1,p}(B_M)$ $\hookrightarrow L^\infty(B_M)$ is
compact. Then the Aubin--Lions lemma, together with a Cantor diagonal argument,
yields the existence of a subsequence, which is not relabeled, such that, as
$M\to\infty$,
$$
  f_i^M\to f_i\quad\mbox{strongly in }L^p(0,T;L^\infty(B))
	\quad\mbox{for every ball }B\subset\R^3.
$$

We claim that
$$
  f_i^M\to f_i\quad\mbox{strongly in }L^1(0,T;L^1(\R^3)).
$$
Indeed, we know from Lemma \ref{lem.unifM} that  
$\int_{B}\vv^\theta f_i^M(t)\dv$ $\le C$ for all balls $B\subset\R^3$ 
uniformly in $M$ and for $t\in(0,T)$. Then Fatou's lemma implies that 
$$
  \int_{\R^3}\vv^\theta f_i(t)\dv
	= \int_{\R^3}\liminf_{M\to\infty}\vv^\theta f_i^M(t)\mathrm{1}_{B_M}\dv
	\le \liminf_{M\to\infty}\int_{\R^3}\vv^\theta f_i^M(t)\mathrm{1}_{B_M}\dv
	\le C,
$$
and this bound holds uniformly for $t\in (0,T)$.
Set $f_i^M(t):=0$ outside of $B_M$ and let $R<M$. We write
\begin{align*}
  \int_0^T\int_{\R^3}|f_i^M-f_i|\dv\ds 
	&= \int_0^T\int_{B_R}|f_i^M-f_i|\dv\ds + \int_0^T\int_{\{R\le|v|\le M\}}|f_i^M-f_i|\dv\ds \\
	&\phantom{xx}{}+ \int_0^T\int_{\{|v|>M\}}|f_i^M-f_i|\dv\ds =: J_1^M+J_2^M+J_3^M.
\end{align*}
Because of the strong convergence of $(f_i^M)$ in $B_R$, 
we have $J_1^M\to 0$ as $M\to\infty$. We deduce from the uniform bound for 
$\vv^\theta f_i^M$ in $L^1(\R^3)$ that
$$
  J_2^M \le \frac{1}{R^\theta}\int_0^T\int_{\{R\le|v|\le M\}}\vv^\theta|f_i^M-f_i|\dv\ds
	\le \frac{C}{R^\theta}.
$$
In a similar way, since $f_i^M=0$ in $\{|v|>M\}$, we have
$$
  J_3^M \le \frac{1}{R^\theta}\int_0^T\int_{\{|v|>M\}}\vv^\theta f_i\dv\le \frac{C}{R^\theta}.
$$
We conclude that 
$$
  \limsup_{M\to\infty}\int_0^T\int_{\R^3}|f_i^M-f_i|\dv\ds \le \frac{C}{R^\theta}
	\quad\mbox{for all }R>0.
$$
Since the left-hand side is independent of $R$, it follows that
$\limsup_{M\to\infty}\int_0^T\int_{\R^3}|f_i^M - f_i|\dv\ds = 0$, proving the claim.

We also obtain, for a subsequence, the weak convergences
\begin{align*}
  \na f_i^M\rightharpoonup \na f_i &\quad\mbox{weakly in }L^p(0,T;L^p(B)), \\
	\pa_t f_i^M\rightharpoonup \pa_t f_i &\quad\mbox{weakly in }L^p(0,T;W^{1,p}(B)')
\end{align*}
as $M\to\infty$ for any ball $B\subset\R^3$. These convergences are sufficient to
pass to the limit $M\to\infty$ in \eqref{2.approx}, and the limit $f_i^\eps:=f_i$ is a weak
solution to
\begin{align}\label{3.approx}
  \pa_t f_i^\eps &+ \delta\bigg(\vv^K f_i^\eps - g(v)\int_{\R^3}\vv^K f_i^\eps \dv
	\bigg) - \delta\diver\big(|\na f_i^\eps|^{p-2}\na f_i^\eps\big) \\
	&= \sum_{j=1}^s c_{ji}^\eps[f^\eps]\diver\bigg(\na f_i^\eps 
	+ \frac{m_if_i^\eps}{T_{ji}^\eps[f^\eps]}
	(v-u_{ji}^\eps[f^\eps])\bigg)\quad\mbox{in }\R^3,\ t>0, \nonumber
\end{align}
with the initial conditions \eqref{1.ic}.


\subsection{Limit $\eps\to 0$}

Let $f_i^\eps$ be a weak solution to \eqref{1.ic} and \eqref{3.approx}. An integration yields
the conservation of mass:
\begin{equation}\label{4.mass}
  \int_{\R^3}f_i^\eps(t)\dv = n_i = \int_{\R^3}f_i^0\dv > 0.
\end{equation}
Strictly speaking, we cannot use the test function $\phi=1$ in \eqref{3.approx}
and we need to work with a cutoff function $\psi_R$; we refer to Appendix \ref{sec.app}
for details.

\begin{lemma}\label{lem.unifeps}
There exists a constant $C(\delta,T)>0$ independent of $\eps$ such that
\begin{align*}
  &\sup_{0<t<T}\sum_{i=1}^s\int_{\R^3}\big(f_i^\eps(t)^2+\vv^\theta f_i^\eps(t)\big)\dv 
	+ \sum_{i,j=1}^s\int_0^t\int_{\R^3}c_{ij}[f^\eps]|\na f_i^\eps|^2\dv\ds \\
	&\phantom{xx}{}+ \sum_{i=1}^s\int_0^T\int_{\R^3}|\na f_i^\eps|^p \dv\ds
	+ \sum_{i=1}^s\int_0^T\int_{\R^3}\big(\vv^K(f_i^\eps)^2 + \vv^{K+\theta}f_i^\eps\big)
	\dv\ds \le C(\delta,T).
\end{align*}
\end{lemma}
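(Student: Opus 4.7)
\medskip\noindent\textbf{Proof plan.} The plan is to mirror the proof of Lemma \ref{lem.unifM} on the whole space $\R^3$, tracking carefully that all constants can be chosen independent of $\eps$ (they may still depend on $\delta$). The only serious new ingredient is the uniform-in-$\eps$ control of the ``bad'' coefficients $c_{ji}^\eps/T_{ji}^\eps$, $|u_{ji}^\eps|$, $c_{ji}^\eps$ appearing in the drift. All test functions below are non-integrable on $\R^3$, so they must first be multiplied by a cutoff $\psi_R$ as in Appendix \ref{sec.app} and then we pass to the limit $R\to\infty$.

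First I would test \eqref{3.approx} with $f_i^\eps$. Using mass conservation \eqref{4.mass} to bound $\int g(v)f_i^\eps\dv\le n_i$, and integrating by parts in the drift part of the collision operator (writing $f_i^\eps(v-u_{ji}^\eps)\cdot\na f_i^\eps=\tfrac12(v-u_{ji}^\eps)\cdot\na(f_i^\eps)^2$ and using $\diver(v-u_{ji}^\eps)=3$), I obtain
\begin{align*}
\tfrac12\tfrac{d}{dt}\|f_i^\eps\|_{L^2}^2
&+ \delta\int_{\R^3}\vv^K(f_i^\eps)^2\dv
+ \delta\int_{\R^3}|\na f_i^\eps|^p\dv
+ \sum_j c_{ji}^\eps[f^\eps]\int_{\R^3}|\na f_i^\eps|^2\dv \\
&\le \delta\,n_i\int_{\R^3}\vv^K f_i^\eps\dv
+ \tfrac32\sum_j\frac{c_{ji}^\eps[f^\eps]\,m_i}{T_{ji}^\eps[f^\eps]}\int_{\R^3}(f_i^\eps)^2\dv.
\end{align*}
Next I would test with $\vv^\theta$, $0<\theta<1-3/p$, exactly as in Lemma \ref{lem.unifM}: the integration-by-parts of the drift yields an absorbable $\vv^{\theta-2}$-weighted contribution, the polynomial part of the regularization gives a $\tfrac{\delta}{4}\vv^{K+\theta}f_i^\eps$ absorbable term, and Young's inequality on the $\delta|\na f|^{p-2}\na f\cdot v\vv^{\theta-2}$ term transfers the remainder into the $\delta|\na f|^p$ dissipation provided $p(\theta-1)<-3$. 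Adding the two estimates and using $\vv^K\le C(\delta)+\tfrac{\delta}{8}\vv^{K+\theta}$ to control the cross term $\delta n_i\int\vv^K f_i^\eps$, everything closes by Gronwall's lemma.

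The main obstacle is the final right-hand side term $\tfrac{c_{ji}^\eps m_i}{T_{ji}^\eps}\|f_i^\eps\|_{L^2}^2$, since a naïve estimate via $T_{ji}^\eps\ge c\eps$ produces a constant $C(\eps)\to\infty$. This is overcome by observing that the same three coefficient bounds used throughout the paper are in fact $\eps$-independent: (i) $T_j^{\eps,\uparrow}\le\tfrac{m_j}{3n_j}\int f_j|v-u_j^\eps|^2\dv$ since $\min\{f_j,g/\eps\}\le f_j$, hence $c_{ji}^\eps\le C(1+M_2(f^\eps)^{\gamma/2})$ with $M_2$ the second moment; (ii) $|u_j^\eps|^2\le \tfrac{1}{n_j}\int f_j|v|^2\dv$ by Cauchy--Schwarz; (iii) $T_{ji}^\eps\ge\min(T_i^{\eps,\downarrow},T_j^{\eps,\downarrow})$ (convex-combination structure), and $T_j^{\eps,\downarrow}\ge \tfrac{m_j}{3n_j}\int f_j|v-u_j^\eps|^2\dv\ge T_j$ (true temperature), for which a uniform positive lower bound is available by the Fenchel--Young argument sketched in the introduction (Lemma \ref{lem.lowerT}), depending only on the initial entropy and mass. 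Thus $c_{ji}^\eps/T_{ji}^\eps$ is bounded by a constant depending only on moments of $f^\eps$ and on $\delta$ (via the regularization-controlled moment estimate), which closes the argument uniformly in $\eps$.
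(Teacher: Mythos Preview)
Your overall strategy---test with $f_i^\eps$ and with $\vv^\theta$, then close by Gronwall---is the same as the paper's. The gap is in point (iii), where you invoke Lemma~\ref{lem.lowerT} to get a uniform positive lower bound on $T_j$ ``depending only on the initial entropy and mass''. That lemma is proved \emph{after} the limit $\eps\to 0$, and its proof rests on the entropy estimate of Lemma~\ref{lem.delta}. The entropy inequality, in turn, uses the identity \eqref{1.QlogM}, which holds only for the \emph{untruncated} coefficients $c_{ji},u_{ji},T_{ji}$: with the $\eps$-truncated coefficients the collision operator no longer conserves momentum and energy, so the cross term $\sum_{i,j}\int Q_{ji}^\eps(f_i)\log M_{ij}^\eps\dv$ does not vanish and the entropy argument does not close (this is exactly why the paper stresses, in the introduction, that one must pass to the limit $\eps\to 0$ \emph{before} deriving the energy and entropy bounds). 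Consequently you have no $\eps$-uniform entropy control on $f_i^\eps$, and the Fenchel--Young lower bound on $T_j$ is unavailable at this stage; your estimate $c_{ji}^\eps/T_{ji}^\eps\le C$ is circular.

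The paper avoids this by deriving an $L^2$-based (not entropy-based) lower bound: starting from the same inequality $T_i^{\eps,\downarrow}\ge C\lambda^2\big(n_i-\int_{\{|v-u_i^\eps|\le\lambda\}}f_i^\eps\dv\big)$, it applies Cauchy--Schwarz to the integral and optimises in $\lambda$ to get $T_{ji}^\eps\ge C\big(\sum_k\|f_k^\eps\|_{L^2}^2\big)^{-2/3}$; see \eqref{4.lowerT}. This turns the dangerous term into $\sum_{i,j}c_{ji}^\eps T_{ji}^{\eps\,-1}\|f_i^\eps\|_{L^2}^2\le C\sum_k c_{ji}^\eps\|f_k^\eps\|_{L^2}^{10/3}$, and a Gagliardo--Nirenberg interpolation (using mass conservation) converts the $L^2$ power into a subcritical power of $\|\na f_k^\eps\|_{L^p}$, which is then absorbed by the $\delta\int|\na f_i^\eps|^p$ dissipation. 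Your chain $T_j^{\eps,\downarrow}\ge T_j$ is correct, but to finish you need to replace the appeal to Lemma~\ref{lem.lowerT} by this $L^2$ mechanism.
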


\begin{proof} 
We split the proof in several steps.

{\em Step 1: Test function $\vv^\theta$.}
Let $0<\theta<1-3/p$. We use $\vv^\theta$ as a test function in \eqref{3.approx}.
Again, $\vv^\theta$ cannot be used as a test function but we may use
$\vv^\theta\psi_R(v)$ for some cutoff function $\psi_R$; 
see Appendix \ref{sec.app}. Then, summing over $i=1,\ldots,s$,
\begin{align}
  \sum_{i=1}^s&\int_{\R^3}\vv^\theta f_i^\eps(t)\dv 
	- \sum_{i=1}^s\int_{\R^3}\vv^\theta f_i^0\dv
	+ \delta\int_0^t\int_{\R^3}\vv^{K+\theta}f_i^\eps\dv\ds \label{4.aux} \\
	&= \delta\int_0^t\bigg(\int_{\R^3}\vv^\theta g(v)\dv\bigg)
	\bigg(\sum_{i=1}^s\int_{\R^3}\vv^K f_i^\eps\dv\bigg)\ds \nonumber \\
	&\phantom{xx}{}
	- \delta\sum_{i=1}^s\int_0^t\int_{\R^3}|\na f_i^\eps|^{p-2}\na f_i^\eps\cdot\na\vv^\theta\dv 
	+ \sum_{i,j=1}^s\int_0^t\int_{\R^3} c_{ji}^\eps[f^\eps]\na\vv^\theta
	\cdot\na f_i^\eps\dv\ds	\nonumber \\
	&\phantom{xx}{}- \sum_{i,j=1}^s\int_0^t\int_{\R^3} c_{ji}^\eps[f^\eps]
	\frac{m_i}{T_{ji}^\eps[f^\eps]}(v-u_{ji}^\eps[f^\eps])\cdot\na\vv^\theta f_i^\eps\dv\ds 
	\nonumber \\
	&=: I_4+\cdots+I_7. \nonumber
\end{align}
We estimate the right-hand side term by term. First, the integral over $\vv^\theta g(v)$ is
bounded. Using $\vv^K\le (\delta/8)\vv^{K+\theta}+C(\delta)$ and mass conservation
\eqref{4.mass}, we can estimate
$$
  I_4 \le C(\delta) + \frac{\delta}{8}\int_0^t\int_{\R^3}\vv^{K+\theta}f_i^\eps\dv\ds,
$$
and the last integral can be absorbed by the left-hand side of \eqref{4.aux}.
Because of $|\na\vv^\theta|\le\theta\vv^{\theta-1}$ and Young's inequality, 
the term $I_5$ becomes
\begin{align*}
  I_5 &\le \delta C\int_0^t\int_{\R^3}\vv^{\theta-1}|\na f_i^\eps|^{p-1}\dv \\
	&\le \frac{C}{p}\int_0^t\int_{\R^3}\vv^{p(\theta-1)}\dv\ds
	+ \frac{p-1}{p}\delta^{p/(p-1)}\int_0^t\int_{\R^3}|\na f_i^\eps|^{p}\dv\ds \\
	&\le C + \delta^{p/(p-1)}\int_0^t\int_{\R^3}|\na f_i^\eps|^{p}\dv\ds
	\le C + \frac{\delta}{2}\int_0^t\int_{\R^3}|\na f_i^\eps|^{p}\dv\ds,
\end{align*}
taking into account that the integral over $\vv^{p(\theta-1)}$ is bounded since 
$p(\theta-1)<-3$ and choosing $\delta>0$ sufficiently small such that
$\delta^{p/(p-1)}\le\delta/2$. Integrating by parts in $I_6$ leads to
\begin{align}\label{I6.est.ulm}
  I_6 &= -\sum_{i,j=1}^s\int_0^t\int_{\R^3} c_{ji}^\eps[f^\eps]\Delta\vv^\theta f_i\dv\ds \\ \nonumber
	&\le C\sum_{i,j=1}^s\int_0^t\int_{\R^3} c_{ji}^\eps[f^\eps]\vv^{\theta-2}f_i\dv\ds
	\le C\sum_{i,j=1}^s\int_0^t c_{ji}^\eps[f^\eps]\ds,
\end{align}
where we used $\vv^{\theta-2}\le 1$ (note that $\theta<1$) and mass conservation.
It follows from Jensen's inequality, applied to the probability measure $(f_i/n_i)\dv$, that
for $q\ge 0$ and $r\ge 1$,
\begin{equation}\label{4.vv}
  \bigg(\int_{\R^3}\vv^q \frac{f_i^\eps}{n_i}\dv\bigg)^r 
	\le \int_{\R^3}\vv^{qr}\frac{f_i^\eps}{n_i}\dv.
\end{equation}
The final term $I_7$ becomes
\begin{align*}
	I_7 &= -\theta\sum_{i,j=1}^s\int_0^t\int_{\R^3}\frac{c_{ji}^\eps[f^\eps]}{T_{ji}^\eps[f^\eps]}
	m_i\vv^{\theta-2}\big(|v|^2	- v\cdot u_{ji}^\eps[f^\eps]\big)f_i^\eps\dv\ds \\
	&\le C\sum_{i,j=1}^s\int_0^t\int_{\R^3}c_{ji}^\eps[f^\eps]
	\vv^{\theta-1}\frac{|u_{ji}^\eps[f^\eps]|}{T_{ji}^\eps[f^\eps]}f_i^\eps\dv\ds \\
	&\le C\sum_{i,j=1}^s\int_0^t c_{ji}^\eps[f^\eps]
	\frac{|u_{ji}^\eps[f^\eps]|}{T_{ji}^\eps[f^\eps]}\ds,
\end{align*}
where we used $\vv^{\theta-1}\le 1$ and mass conservation. In view of 
definition \eqref{2.ujieps} and Jensen's inequality \eqref{4.vv}, we have
\begin{align}\label{4.ujieps}
	|u_{ji}^\eps[f^\eps]|^K &\le \max\big\{|u_{i}^\eps[f^\eps]|,|u_{j}^\eps[f^\eps]|\big\}^K
	\le \bigg(\sum_{i=1}^s\frac{1}{n_i}\int_{\R^3}\vv\min\{f_i^\eps,g(v)/\eps\}\dv\bigg)^K \\
	&\le C\bigg(\sum_{i=1}^s\int_{\R^3}\vv f_i^\eps\dv\bigg)^K
	\le C\sum_{i=1}^s\int_{\R^3}\vv^K f_i^\eps\dv. \nonumber
\end{align}
Thus, by Young's inequality and $\vv^K\le C(\delta)+(\delta/8)\vv^{K+\theta}$,
\begin{align}\label{4.I7}
	I_7 &\le \sum_{i,j=1}^s\int_0^t|u_{ji}^\eps[f^\eps]|^K\ds
	+ C\sum_{i,j=1}^s \int_0^t\bigg|\frac{c_{ji}^\eps[f^\eps]}{T_{ji}^\eps[f^\eps]}
	\bigg|^{K/(K-1)}\ds \\
	&\le C(\delta) 
	+ \frac{\delta}{8}\sum_{i=1}^s\int_0^t\int_{\R^3}\vv^{K+\theta} f_i^\eps\dv\ds
	+ C\sum_{i,j=1}^s \int_0^t\bigg|\frac{c_{ji}^\eps[f^\eps]}{T_{ji}^\eps[f^\eps]}
	\bigg|^{K/(K-1)}\ds. \nonumber
\end{align}
Let us distinguish two cases, according to the value of $\gamma$.

\subsection*{Case 1: $\gamma\geq 0$} 

We distinguish the subcases $\gamma\ge 2$ and $0\le\gamma<2$. First, let $\gamma\ge 2$.
Jensen's inequality \eqref{4.vv} leads to
\begin{equation*}
  c_{ji}^\eps[f^\eps]\le \eps+C|T_j^{\eps,\uparrow}|^{\gamma/2}
	\le 1+C\bigg(\int_{\R^3}\vv^2 f_i^\eps\dv\bigg)^{\gamma/2}
	\le 1+C\int_{\R^3}\vv^{ \gamma }f_i^\eps\dv.
\end{equation*}
If $0\le\gamma<2$, we apply Young's inequality:
\begin{equation*}
	c_{ji}^\eps[f^\eps]\le \eps+C|T_j^{\eps,\uparrow}|^{\gamma/2}
	\le 1+C\bigg(\int_{\R^3}\vv^2 f_i^\eps\dv\bigg)^{\gamma/2}
	\le 1+C\int_{\R^3}\vv^{ 2 }f_i^\eps\dv.
\end{equation*}
Summarizing, we obtain for all $\gamma\ge 0$:
\begin{equation}\label{4.cjieps}
	c_{ji}^\eps[f^\eps]\le 1+C\int_{\R^3}\vv^{\max\{\gamma,2\}}f_i^\eps\dv.
\end{equation}
Consequently, if we choose $K$ sufficiently large, \eqref{I6.est.ulm} yields
$$
  I_6 \le C + C\sum_{i=1}^s\int_0^t\int_{\R^3}\vv^{\max\{\gamma,2\}}f_i^\eps\dv
	\le C(\delta) + \frac{\delta}{8}\sum_{i=1}^s\int_0^t\int_{\R^3}\vv^{K+\theta}f_i^\eps\dv.
$$

To estimate the last term in \eqref{4.I7}, we bound $T_{ji}^\eps[f^\eps]$ from below. 
For this, we choose an arbitrary $\lambda>0$ and set $u_i^\eps=u_i^\eps[f^\eps]$:
\begin{align}\label{4.Tieps}
  T_{i}^{\eps,\downarrow}[f^\eps] &\ge C\int_{\R^3}f_i^\eps|v-u_i^\eps|^2\dv
	\ge C\int_{\{|v-u_i^\eps|>\lambda\}}f_i^\eps|v-u_i^\eps|^2\dv \\
	&\ge C\lambda^2\int_{\{|v-u_i^\eps|>\lambda\}}f_i^\eps\dv
	= C\lambda^2\bigg(n_i - \int_{\{|v-u_i^\eps|\le\lambda\}}f_i^\eps\dv\bigg). \nonumber
\end{align}
Applying the Cauchy--Schwarz inequality to the last integral, we have
$$
  T_{i}^{\eps,\downarrow}[f^\eps] \ge C\lambda^2\bigg\{n_i
	- \|f_i^\eps\|_{L^2(\R^3)}\bigg(\int_{\{|v-u_i^\eps|\le\lambda\}}\dv\bigg)^{1/2}\bigg\}
	\ge C\lambda^2\big(n_i - C\lambda^{3/2}\|f_i^\eps\|_{L^2(\R^3)}\big),
$$
since the integral over any ball in $\R^3$ with radius $\lambda$ is of the
order $\lambda^3$. We obtain with the choice 
$\lambda=C_0 n_i^{2/3}\|f_i^\eps\|_{L^2(\R^3)}^{-2/3}$ for some $C_0>0$:
\begin{equation*}
  T_{i}^{\eps,\downarrow}[f^\eps]\ge CC_0^2(1-CC_0^{3/2})
  n_i^{7/3}\|f_i^\eps\|_{L^2(\R^3)}^{-4/3}	
\end{equation*}
and therefore, choosing $C_0>0$ sufficiently small,
\begin{equation}\label{4.lowerT}
  T_{ji}^\eps[f^\eps] \ge \min\big\{T_i^{\eps,\downarrow}[f^\eps],
	T_j^{\eps,\downarrow}[f^\eps]\big\}
	\ge C\bigg(\sum_{k=1}^s\|f_k^\eps\|_{L^2(\R^3)}^{2}\bigg)^{-2/3}.
\end{equation}

We continue with the estimate of the last term in \eqref{4.I7}. We infer from Young's inequality
with exponents $3(K-1)/(2K)$ and $3(K-1)/(K-3)$ as well as estimate \eqref{4.cjieps} and
Jensen's inequality \eqref{4.vv} that
\begin{align*}
  \sum_{i,j=1}^s\bigg|\frac{c_{ji}^\eps[f^\eps]}{T_{ji}^\eps[f^\eps]}
	\bigg|^{K/(K-1)} &\le \sum_{i,j=1}^s T_{ji}^\eps[f^\eps]^{-3/2}
	+ C\sum_{i,j=1}^s c_{ji}^\eps[f^\eps]^{3K/(K-3)} \\
	&\le C + C\sum_{k=1}^s\|f_k^\eps\|_{L^2(\R^3)}^{2}
	+ C\sum_{i=1}^s\int_{\R^3}\vv^{3K\max\{\gamma,2\}/(K-3)}f_i^\eps\dv.
\end{align*}
For sufficiently large $K>0$, we have $3K\max\{\gamma,2\}/(K-3)<K+\theta$. Hence,
$$
  \sum_{i,j=1}^s\int_0^t\bigg|\frac{c_{ji}^\eps[f^\eps]}{T_{ji}^\eps[f^\eps]}
	\bigg|^{K/(K-1)}\ds
	\le C(\delta) + C\sum_{i=1}^s\int_0^t\|f_i^\eps\|_{L^2(\R^3)}^{2}\ds
	+ \frac{\delta}{8}\sum_{i=1}^s\int_0^t\int_{\R^3}\vv^{K+\theta}f_i^\eps\dv.
$$ 
We infer from \eqref{4.I7} that
$$
  I_7 \le C(\delta) + \frac{\delta}{4}\sum_{i=1}^s\int_0^t\int_{\R^3}\vv^{K+\theta}f_i^\eps\dv
	+ C\sum_{i=1}^s\int_0^t\int_{\R^3}(f_i^\eps)^2\dv\ds.
$$

\subsection*{Case 2: $\gamma < 0$.} 

It follows from \eqref{4.lowerT} that
\begin{align}\label{est.c.ulm}
  c_{ji}^\eps[f^\eps]\le \eps+C|T_j^{\eps,\downarrow}|^{\gamma/2}
	\leq 1 + C\|f_i^\eps\|_{L^2(\R^3)}^{-2\gamma/3}
	\leq 1 + C\bigg(\sum_{k=1}^s \|f_k^\eps\|_{L^2(\R^3)}^{2}\bigg)^{-\gamma/3}.
\end{align}
Therefore, estimates \eqref{I6.est.ulm}, \eqref{4.I7} lead to
\begin{align*}
  I_6 + I_7 &\leq C\sum_{i,j=1}^s\int_0^t c_{ji}^\eps[f^\eps]\ds + C(\delta) 
  + \frac{\delta}{8}\sum_{i=1}^s\int_0^t\int_{\R^3}\vv^{K+\theta} f_i^\eps\dv\ds \\
	&\phantom{xx}
  + C\sum_{i,j=1}^s \int_0^t\bigg|\frac{c_{ji}^\eps[f^\eps]}{T_{ji}^\eps[f^\eps]}
  \bigg|^{K/(K-1)}\ds\\
  &\leq C(\delta) + \frac{\delta}{8}\sum_{i=1}^s\int_0^t\int_{\R^3}\vv^{K+\theta} 
	f_i^\eps\dv\ds \\
	&\phantom{xx}+ C\sum_{i,j=1}^s\int_0^t \bigg(\sum_{k=1}^s \|f_k^\eps\|_{L^2(\R^3)}^{2}
  \bigg)^{K(2-\gamma)/(3(K-1))}\ds.
\end{align*}
The Gagliardo--Nirenberg inequality 
\begin{align*}
  \|f_k\|_{L^2(\R^3)}\leq C\|f_k\|_{L^1(\R^3)}^{1-\xi}\|\nabla f_k\|_{L^p(\R^3)}^\xi,
	\quad\mbox{where } \xi = \frac{3p}{2(4p-3)},
\end{align*}
and mass conservation imply that
\begin{align*}
  I_6 + I_7 &\leq C(\delta) + \frac{\delta}{8}\sum_{i=1}^s\int_0^t\int_{\R^3}\vv^{K+\theta} 
	f_i^\eps\dv\ds + \frac{\delta}{2}\int_0^t\int_{\R^3}|\na f_i^\eps|^p\dv\ds, 
\end{align*}
as long as $2\xi(2-\gamma)/3p$ or equivalently $p>(5-\gamma)/4$.

In both cases, summarizing the estimates for $I_4,\ldots,I_7$, we conclude from 
\eqref{4.aux} that
\begin{align}\label{4.aux1}
  \sum_{i=1}^s&\int_{\R^3}\vv^\theta f_i^\eps(t)\dv
	+ \frac{\delta}{2}\int_0^t\int_{\R^3}\vv^{K+\theta}f_i^\eps\dv\ds \\
	&\le C(\delta) + \frac{\delta}{2}\int_0^t\int_{\R^3}|\na f_i^\eps|^p\dv\ds 
	+ C\sum_{i=1}^n\int_0^t\int_{\R^3}(f_i^\eps)^2\dv\ds. \nonumber
\end{align}

We still need to control the integrals on the right-hand side of \eqref{4.aux1}, 
which is done in the next step.

{\em Step 2: Test function $f_i^\eps$.} We use the test function $f_i^\eps$ in \eqref{3.approx}
and sum over $i=1,\ldots,s$:
\begin{align}\label{4.aux2}
  \frac12\sum_{i=1}^s&\int_{\R^3}f_i^\eps(t)^2\dv - \frac12\sum_{i=1}^s\int_{\R^3}(f_i^0)^2\dv
	+ \delta\sum_{i=1}^s\int_0^t\int_{\R^3}\vv^K(f_i^\eps)^2\dv\ds \\
	&\phantom{xx}{}+ \delta\sum_{i=1}^s\int_0^t\int_{\R^3}|\na f_i^\eps|^p\dv\ds
	+ \sum_{i,j=1}^s\int_0^t\int_{\R^3}c_{ji}^\eps[f^\eps]|\na f_i^\eps|^2\dv\ds \nonumber \\
	&= \delta\sum_{i=1}^s\int_0^t\bigg(\int_{\R^3}f_i^\eps g(v)\dv\bigg)
	\bigg(\int_{\R^3}\vv^K f_i^\eps\dv\bigg)\ds \nonumber \\
	&\phantom{xx}{}
	- \frac12\sum_{i,j=1}^s\int_0^t\int_{\R^3}c_{ji}^\eps[f^\eps]\frac{m_i}{T_{ji}^\eps[f^\eps]}
	(v-u_{ji}^\eps[f^\eps])\cdot\na(f_i^\eps)^2\dv\ds \nonumber \\
	&=: I_8+I_9. \nonumber 
\end{align}
We use mass conservation to infer that $\int_{\R^3}f_i^\eps g(v)\dv\le \int_{\R^3}f_i^\eps\dv
\le C$ and hence,
\begin{align*}
  I_8 \le \delta C\sum_{i=1}^s\int_0^t\int_{\R^3}\vv^K f_i^\eps\dv
	\le C + \frac{\delta}{8}\sum_{i=1}^s\int_0^t\int_{\R^3}\vv^{K+\theta}f_i^\eps\dv\ds,
\end{align*}
and the last integral can be absorbed by the left-hand side of \eqref{4.aux2}.
By integration by parts and the lower bound \eqref{4.lowerT}, we have
\begin{align}\label{I9.est.ulm.1}
  I_9 &= \frac12\sum_{i,j=1}^s\int_0^t\int_{\R^3}c_{ji}^\eps[f^\eps]
	\frac{m_i}{T_{ji}^\eps[f^\eps]}\diver(v-u_{ji}^\eps[f^\eps])(f_i^\eps)^2\dv\ds \\ \nonumber
	&= \frac32\sum_{i,j=1}^s\int_0^tc_{ji}^\eps[f^\eps]
	\frac{m_i}{T_{ji}^\eps[f^\eps]}\|f_i^\eps\|_{L^2(\R^3)}^2\ds
	\le C\sum_{i,j,k=1}^s\int_0^t c_{ji}^\eps[f^\eps]\|f_{k}^\eps\|_{L^2(\R^3)}^{10/3}\ds .
\end{align}

Let $\gamma\ge 0$. Then the Gagliardo--Nirenberg inequality
with $\zeta=3p/(8p-6)\in(0,1)$ and mass conservation lead to
\begin{align*}
  I_9 \leq C\sum_{i,j,k=1}^s\int_0^t c_{ji}^\eps[f^\eps]
	\|\na f_{k}^\eps\|_{L^{p}(\R^3)}^{10\zeta/3}
  \|f_{k}^\eps\|_{L^1(\R^3)}^{10(1-\zeta)/3}\ds
  \le C\sum_{i,j,k=1}^s\int_0^t c_{ji}^\eps[f^\eps]
  \|\na f_{k}^\eps\|_{L^{p}(\R^3)}^{5p/(4p-3)}\ds.
\end{align*}
Then, using Young's inequality, estimate \eqref{4.cjieps} for $c_{ji}^\eps[f^\eps]$,
and Jensen's inequality \eqref{4.vv},
\begin{align*}
  I_9 &\le \frac{\delta}{8}\sum_{i=1}^s\int_0^t\int_{\R^3}|\na f_i^\eps|^p\dv\ds
	+ C(\delta)\sum_{i,j=1}^s|c_{ji}^\eps[f^\eps]|^{(4p-3)/(4p-8)} \\
	&\le C + \frac{\delta}{8}\sum_{i=1}^s\int_0^t\int_{\R^3}|\na f_i^\eps|^p\dv\ds
	+ C(\delta)\sum_{i=1}^s\int_0^t\int_{\R^3}\vv^{(2+\gamma)(4p-3)/(4p-8)}f_i^\eps\dv \\
	&\le C(\delta) + \frac{\delta}{8}\sum_{i=1}^s\int_0^t\int_{\R^3}|\na f_i^\eps|^p\dv\ds
	+ \frac{\delta}{8}\sum_{i=1}^s\int_0^t\int_{\R^3}\vv^{K+\theta}f_i^\eps\dv,
\end{align*}
if we choose $K+\theta>(2+\gamma)(4p-3)/(4p-8)$.

If $\gamma < 0$, estimates \eqref{est.c.ulm} and \eqref{I9.est.ulm.1} imply that
\begin{align*}
  I_9\leq C\sum_{k=1}^s\int_0^t \|f_{k}^\eps\|_{L^{2}(\R^3)}^{(10-2\gamma)/3}\ds,
\end{align*}
and Gagliardo--Nirenberg and Young's inequalities allow us to bound $I_9$ 
similarly as above as
\begin{align*}
  I_9\leq C(\delta) + \delta \sum_{i=1}^s\int_0^t\int_{\R^3}|\na f_i^\eps|^p\dv\ds,
\end{align*}
as long as $p > 2 - \gamma/4$.

In both cases, we insert the estimates for $I_8$ and $I_9$ into \eqref{4.aux2} to obtain
\begin{align*}
  \frac12\sum_{i=1}^s&\int_{\R^3}f_i^\eps(t)^2\dv	
	+ \frac{\delta}{2}\sum_{i=1}^s\int_0^t\int_{\R^3}\vv^K(f_i^\eps)^2\dv\ds 
	+ \frac{\delta}{4}\sum_{i=1}^s\int_0^t\int_{\R^3}|\na f_i^\eps|^p\dv\ds \\
	&{}+ \sum_{i,j=1}^s\int_0^t\int_{\R^3}c_{ji}^\eps[f^\eps]|\na f_i^\eps|^2\dv\ds
	\le C(\delta) + \frac{\delta}{4}\sum_{i=1}^s\int_0^t\int_{\R^3}\vv^{K+\theta}f_i^\eps\dv\ds.
\end{align*}

{\em Step 3: End of the proof.}
We add the previous inequality to \eqref{4.aux1},
\begin{align*}
  &\sum_{i=1}^s\int_{\R^3}\big(f_i^\eps(t)^2 + \vv^\theta f_i^\eps(t)\big)\dv
	+ \frac{\delta}{2}\sum_{i=1}^s\int_0^t\int_{\R^3}\vv^K(f_i^\eps)^2\dv\ds \\
	&\phantom{xx}{}+ \frac{\delta}{4}\sum_{i=1}^s\int_0^t\int_{\R^3}|\na f_i^\eps|^p\dv\ds
	+ \sum_{i,j=1}^s\int_0^t\int_{\R^3}c_{ji}^\eps[f^\eps]|\na f_i^\eps|^2\dv\ds \\
	&\phantom{xx}{}+ \frac{\delta}{4}\sum_{i=1}^s\int_0^t\int_{\R^3}\vv^{K+\theta}f_i^\eps\dv\ds
	\le C(\delta) + C\sum_{i=1}^s\int_0^t\int_{\R^3}(f_i^\eps)^2 \dv\ds.
\end{align*}
Then Gronwall's lemma concludes the proof.
\end{proof}

\begin{lemma}\label{lem.unifeps2}
There exists a constant $C(\delta,T)>0$ independent of $\eps$ and a number $r>1$ such that
$$
  \|\pa_t f_i^\eps\|_{L^r(0,T;W^{-1,p}(\R^3))}\le C(\delta,T).
$$
\end{lemma}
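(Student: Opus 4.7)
The argument is a standard duality estimate. Using \eqref{3.approx}, write
\begin{align*}
\pa_t f_i^\eps = -\delta\vv^K f_i^\eps + \delta g(v)\int_{\R^3}\vv^K f_i^\eps\dv + \diver F_i^\eps,
\end{align*}
with
\begin{align*}
F_i^\eps = \delta|\na f_i^\eps|^{p-2}\na f_i^\eps + \sum_{j=1}^s c_{ji}^\eps[f^\eps]\bigg(\na f_i^\eps + \frac{m_i f_i^\eps}{T_{ji}^\eps[f^\eps]}(v-u_{ji}^\eps[f^\eps])\bigg).
\end{align*}
I will pair the equation with a test function $\phi$ in the predual of $W^{-1,p}(\R^3)$ (i.e.\ in $W^{1,p/(p-1)}(\R^3)$) and bound each summand uniformly in $\eps$ in a Bochner--Lebesgue space $L^\rho(0,T;L^q(\R^3))$ that embeds into $L^r(0,T;W^{-1,p}(\R^3))$ via Sobolev duality, which is permitted because $p>3$.

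Before estimating, I would extract from within the proof of Lemma~\ref{lem.unifeps} the following $\eps$-uniform bounds on the nonlinear coefficients: $c_{ji}^\eps[f^\eps]\in L^\rho(0,T)$ for some $\rho>1$ via \eqref{4.cjieps} combined with Jensen's inequality \eqref{4.vv} and the moment bound $\int\vv^{K+\theta}f_i^\eps\dv\in L^1(0,T)$; a uniform lower bound $T_{ji}^\eps[f^\eps]\ge c(\delta)>0$ via \eqref{4.lowerT} and the $L^\infty(0,T;L^2(\R^3))$ estimate of Lemma~\ref{lem.unifeps}; and a uniform upper bound $|u_{ji}^\eps[f^\eps]|\le C(\delta)$ via \eqref{4.ujieps} applied with exponent $1$ together with mass conservation.

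Each piece is then bounded as follows: the $p$-Laplacian flux lies in $L^{p/(p-1)}(0,T;L^{p/(p-1)}(\R^3))$ by $\na f_i^\eps\in L^p(L^p)$; the linear-diffusion flux $c_{ji}^\eps\na f_i^\eps$ lies in $L^\sigma(L^p)$ for some $\sigma>1$ by the $L^p(L^p)$ bound on $\na f_i^\eps$ and the $L^\rho(0,T)$ bound on $c_{ji}^\eps$; the drift flux $c_{ji}^\eps f_i^\eps(v-u_{ji}^\eps)/T_{ji}^\eps$ lies in $L^\sigma(L^2)$ because the pointwise coefficient bounds reduce it to controlling $|v|f_i^\eps$, which is in $L^2(L^2)$ for $K\ge 2$ via $\vv^K(f_i^\eps)^2\in L^1(L^1)$; and the zeroth-order terms are controlled similarly by $\vv^K(f_i^\eps)^2\in L^1(L^1)$ and by the $L^1(0,T)$ bound on $\int\vv^K f_i^\eps\dv$. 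Selecting $r$ to be the smallest of the resulting time-Lebesgue exponents (still strictly greater than $1$) yields the claim. The main obstacle is purely bookkeeping: matching up the correct $L^\rho_tL^q_v$ exponent for each piece and verifying the spatial embedding $L^q(\R^3)\hookrightarrow W^{-1,p}(\R^3)$; the delicate piece is the drift flux, whose linear growth in $|v|$ is precisely what the high-moment estimate $\vv^K(f_i^\eps)^2\in L^1(L^1)$ from Lemma~\ref{lem.unifeps} is designed to dominate.
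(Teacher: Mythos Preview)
Your overall strategy---decompose \eqref{3.approx} into zeroth-order terms plus a divergence-form flux and bound each piece via the estimates of Lemma~\ref{lem.unifeps}---is exactly what the paper does. One of your coefficient bounds, however, is not justified. You assert a uniform $L^\infty(0,T)$ bound $|u_{ji}^\eps[f^\eps]|\le C(\delta)$ ``via \eqref{4.ujieps} applied with exponent $1$ together with mass conservation.'' But \eqref{4.ujieps} with exponent $1$ yields only $|u_{ji}^\eps|\le C\sum_k\int_{\R^3}\vv f_k^\eps\dv$, and mass conservation controls the zeroth moment $\int f_k^\eps\dv$, not the first moment $\int\vv f_k^\eps\dv$. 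The only $L^\infty$-in-time moment bound available from Lemma~\ref{lem.unifeps} is for $\vv^\theta f_i^\eps$ with $\theta<1-3/p<1$, which is too weak to control $\int\vv f_i^\eps\dv$ pointwise in $t$.

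The paper does not claim any $L^\infty(0,T)$ bound on $u_{ji}^\eps$. Instead it applies \eqref{4.ujieps} with exponent $K+\theta$ (via Jensen, \eqref{4.vv}) to obtain $|u_{ji}^\eps|^{K+\theta}\le C\sum_k\int\vv^{K+\theta}f_k^\eps\dv$, which by Lemma~\ref{lem.unifeps} lies only in $L^1(0,T)$; hence $u_{ji}^\eps\in L^{K+\theta}(0,T)$. Combining this with your $c_{ji}^\eps\in L^{\rho}(0,T)$ and $T_{ji}^\eps\ge c(\delta)$ still places the drift coefficient $c_{ji}^\eps u_{ji}^\eps/T_{ji}^\eps$ in some $L^{r'}(0,T)$ with $r'>1$ (for $K$ large), so the argument closes once this correction is made---only with a possibly smaller final value of $r$ than your claimed $L^\infty$ bound would have given.
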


\begin{proof}
The estimate for $\vv^{K+\theta} f_i^\eps$ in Lemma \ref{lem.unifeps}
and bounds \eqref{4.cjieps}, \eqref{est.c.ulm} show that $c_{ji}^\eps[f^\eps]$ is uniformly bounded
in $L^{(K+\theta)/(2+\gamma)}(0,T)$ (or better), while $T_{ji}^\eps[f^\eps]^{-1}$ is uniformly 
bounded in $L^\infty(0,T)$ because of the lower bound \eqref{4.lowerT} and
the estimate for $f_i^\eps$ in $L^\infty(0,T;L^2(\R^3))$. 
Furthermore, we conclude from \eqref{4.ujieps} that 
$|u_{ji}^\eps[f^\eps]|^{K+\theta}\le C\sum_{i=1}^s\vv^{K+\theta} f_i^\eps\dv$ 
(using the Jensen inequality \eqref{4.vv}) is uniformly bounded
in $L^1(0,T)$. This shows that $c_{ji}[f^\eps]T_{ji}[f^\eps]^{-1}u_{ji}[f^\eps]$
is uniformly bounded in $L^{(K+\theta)/(3+\gamma)}(0,T)$.
Furthermore, by Young's inequality and Lemma \ref{lem.unifeps},
\begin{align*}
  \int_0^T\int_{\R^3}&(\vv^K f_i^\eps)^{(K+2\theta)/(K+\theta)}\dv\ds
	= \int_0^T\int_{\R^3}\big(\vv^{K+\theta}f_i^\eps\big)^{K/(K+\theta)}
	\big(\vv^K(f_i^\eps)^2\big)^{\theta/(K+\theta)}\dv\ds \\
	&\le C\int_0^T\|\vv^{K+\theta}f_i^\eps\|_{L^1(\R^3)}\ds
	+ C\int_0^T\|\vv^K(f_i^\eps)^2\|_{L^1(\R^3)}\ds \le C.
\end{align*}
Together with the uniform bounds for $f_i^\eps$ from Lemma \ref{lem.unifeps},
this yields a uniform bound for $\pa_t f_i^\eps$ in
$L^{r}(0,T;W^{-1,p}(\R^3))$ for some $r>1$, finishing the proof.
\end{proof}

The bounds of Lemmas \ref{lem.unifeps} and \ref{lem.unifeps2} and the compact
embedding $W^{1,p}(\R^3)\cap L^2(\R^3;\vv^K\dv)$ $\hookrightarrow L^2(\R^3)$
(see Lemma \ref{lem.comp} in Appendix \ref{sec.comp}) 
allow us to apply the Aubin--Lions lemma
to conclude the existence of a subsequence (not relabeled) such that, as $\eps\to 0$,
$$
  f_i^\eps\to f_i\quad\mbox{strongly in }L^2(0,T;L^2(\R^3)).
$$
Furthermore, we obtain weak convergences for $\na f_i^\eps$ and $\pa_t f_i^\eps$
in suitable spaces. At this point, it is straightforward to pass to the limit $\eps\to 0$
in \eqref{3.approx} to infer that $f_i^\delta:=f_i$ is a weak solution to
\begin{align}\label{4.approx}
  \pa_t f_i^\delta &+ \delta\bigg(\vv^K f_i^\delta - g(v)\int_{\R^3}\vv^K f_i^\delta \dv
	\bigg) - \delta\diver\big(|\na f_i^\delta|^{p-2}\na f_i^\delta\big) \\
	&= \sum_{j=1}^s c_{ji}[f^\delta]\diver\bigg(\na f_i^\delta 
	+ \frac{m_if_i^\delta}{T_{ji}[f^\delta]}
	(v-u_{ji}[f^\delta])\bigg)\quad\mbox{in }\R^3,\ t>0. \nonumber
\end{align}
We observe that the collision operator on the right-hand side is identical to that
one in \eqref{1.eq} and in particular, it conserves mass, momentum, and energy;
see Lemma \ref{lem.cons}.


\subsection{Limit $\delta\to 0$}

Let $f_i^\delta$ be the solution to \eqref{1.ic} and \eqref{4.approx}, constructed
in the previous subsection.
To perform the limit $\delta\to 0$, we derive some estimates uniform in $\delta$.
First, we note that mass conservation still holds, i.e.
$\|f_i^\delta\|_{L^1(\R^3)}= n_i$ for $i=1,\ldots,s$.

\begin{lemma}\label{lem.delta}
There exists a constant $C>0$ independent of $\delta$ (but depending on the initial data)
such that
\begin{align*}
  \sup_{0<t<T}\sum_{i=1}^s\int_{\R^3}\big(f_i^\delta(t)\log f_i^\delta(t) 
	+ f_i^\delta(t)|v|^2\big)\dv &\le C, \\
	\sum_{i,j=1}^s\int_0^T\int_{\R^3} c_{ji}[f^\delta]f_i^\delta
	\bigg|\na\log\frac{f_i^\delta}{M_{ij}[f^\delta]}\bigg|^2\dv\ds &\le C, \\
	\delta\sum_{i=1}^s\int_0^t\int_{\R^3}|\na(f_i^\delta)^{(p-1)/p}|^p\dv\ds 
	+ \delta\sum_{i=1}^s\int_0^t\int_{\R^3}\vv^{K+2}f_i^\delta\dv\ds &\le C.
\end{align*}
\end{lemma}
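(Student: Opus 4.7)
The plan is to derive all three bounds simultaneously by combining an entropy identity with an energy identity, exploiting the gradient-flow structure \eqref{1.gradflow} of the collision operator and its pairwise energy conservation from Lemma \ref{lem.cons}.

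First, I would test \eqref{4.approx} with $\log f_i^\delta$, made rigorous via the approximation $\log(f_i^\delta+\eta)$ followed by $\eta\to 0^+$; this is permissible because the $\delta$-dependent regularity already established gives $f_i^\delta\in L^p(0,T;W^{1,p}(\R^3))$ with $p>3$, hence continuity in $v$. Summing over $i=1,\dots,s$ and applying the cancellation \eqref{1.QlogM} from the Remark following Lemma \ref{lem.ent} yields
\begin{align*}
\sum_{i,j=1}^s\int_{\R^3}Q_{ji}(f_i^\delta)\log f_i^\delta\dv
= -\sum_{i,j=1}^s\int_{\R^3}c_{ji}[f^\delta]f_i^\delta\bigg|\na\log\frac{f_i^\delta}{M_{ij}[f^\delta]}\bigg|^2\dv.
\end{align*}
The $p$-Laplacian contribution becomes $\delta\int|\na f_i^\delta|^p/f_i^\delta\dv = (p/(p-1))^p\delta\int|\na(f_i^\delta)^{(p-1)/p}|^p\dv$, giving directly the first half of the third claimed bound. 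The moment regularization produces $\delta\int\vv^Kf_i^\delta\log f_i^\delta\dv - \delta(\int\vv^Kf_i^\delta\dv)(\int g\log f_i^\delta\dv)$, whose sign will be handled in the combination step.

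Second, I would test \eqref{4.approx} with $m_i|v|^2$ (again via a cutoff $\psi_R$ as in Appendix \ref{sec.app}) and sum over $i$; the collision contributions cancel pairwise by Lemma \ref{lem.cons}, leaving
\begin{align*}
\frac{\mathrm d}{\mathrm dt}\sum_i\int_{\R^3}m_if_i^\delta|v|^2\dv + \delta\sum_i m_i\int_{\R^3}\vv^K|v|^2f_i^\delta\dv = \delta\sum_i m_i\bigg(\int g|v|^2\dv\bigg)\bigg(\int\vv^Kf_i^\delta\dv\bigg) + 2\delta\sum_i m_i\int|\na f_i^\delta|^{p-2}\na f_i^\delta\cdot v\dv.
\end{align*}
Using $\vv^K|v|^2\ge\vv^{K+2}-\vv^K$ and Young's inequality with exponents $p$ and $p/(p-1)$ on the last term (absorbing $|v|^p$ into $\vv^{K+2}$ for $K\ge p-2$, and pairing the $|\na f_i^\delta|^p/f_i^\delta$ factor with the already-positive Fisher-type contribution from Step 1), one obtains the coercive bound $\delta\int_0^T\!\!\int\vv^{K+2}f_i^\delta\dv\ds\le C$ (the second half of the third estimate) together with an $L^\infty(0,T)$-in-$\delta$ bound on the kinetic energy.

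Finally, to close the entropy estimate I would add the two identities and control the sign-indefinite term $\delta\int\vv^Kf_i^\delta\log f_i^\delta\dv$. Splitting $\R^3=\{f_i^\delta\ge e^{-\vv^2}\}\cup\{f_i^\delta<e^{-\vv^2}\}$, on the first set $|\log f_i^\delta|\le\vv^2$ yields a contribution $\le\delta\int\vv^{K+2}f_i^\delta\dv$, absorbable by the coercive quantity of Step 2; on the second set the pointwise bound $f|\log f|\le\vv^2e^{-\vv^2}$ is integrable against $\vv^K\mathrm dv$. The product term $\delta(\int\vv^Kf_i^\delta)(\int g\log f_i^\delta)$ is treated analogously, using the Gaussian decay of $g$ to dominate the $(-\log f)$ tail. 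The Csisz\'ar-type bound $\int f_i^\delta|\log f_i^\delta|\dv\le\int f_i^\delta\log f_i^\delta\dv+C(1+\int f_i^\delta|v|^2\dv)$ then converts the combined identity into a differential inequality of Gronwall type for $\sum_i\int(f_i^\delta\log f_i^\delta+m_if_i^\delta|v|^2)\dv$, from which all three bounds follow upon integration in time. The main obstacle is the bookkeeping needed to ensure that every contribution from the $\delta$-regularization in the entropy balance is either Gaussian-summable or absorbable into the coercive $\delta\int\vv^{K+2}f_i^\delta$ produced by the energy balance; this is precisely what forces $K$ to be chosen sufficiently large relative to $p$ in the approximation scheme.
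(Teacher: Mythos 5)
Your plan mirrors the paper's proof in structure: test with $\log f_i^\delta$, test with $|v|^2$, use the gradient-flow structure \eqref{1.gradflow} and \eqref{1.QlogM} to produce the Fisher-type term, use energy conservation to kill the collision contribution in the energy balance, and add. Your treatment of the moment-regularization terms in the entropy balance (splitting on $\{f_i^\delta\ge e^{-\vv^2}\}$ and using $f|\log f|\le\vv^2 e^{-\vv^2}$ on the complement) is a slightly different but valid alternative to the paper's bound $\log z\le z^\alpha/\alpha$ with $0<\alpha<1/(K+2)$; both land at a contribution absorbable by $\delta\int\vv^{K+2}f_i^\delta\dv$.

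There is, however, a real gap in your Step 2, in the handling of the term $2\delta\int_{\R^3}|\na f_i^\delta|^{p-1}|v|\dv$. You propose a single Young's inequality with exponents $(p,p/(p-1))$ and claim the result is ``absorbing $|v|^p$ into $\vv^{K+2}$ for $K\ge p-2$.'' To make the $p$-Laplacian part pair with the coercive $\delta\int|\na(f_i^\delta)^{(p-1)/p}|^p\dv$ from Step 1, you must first split $|\na f_i^\delta|^{p-1}|v|=\bigl(|v|(f_i^\delta)^{(p-1)/p}\bigr)\cdot\bigl(|\na f_i^\delta|^{p-1}/(f_i^\delta)^{(p-1)/p}\bigr)$; Young's inequality then produces $|v|^p(f_i^\delta)^{p-1}$, not $|v|^p f_i^\delta$. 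Since $p>3$, this carries a power $(f_i^\delta)^{p-1}$ with $p-1>2$, and $\int|v|^p(f_i^\delta)^{p-1}\dv$ is \emph{not} controllable by $\int\vv^{K+2}f_i^\delta\dv$ alone without additional $L^\infty$-type information that is not uniform in $\delta$. The paper resolves this by a second Young's inequality (exponents $q$ and $q/(q-1)$, with $q=4/3$), which factors the term into $|v|^{4p}f_i^\delta$ (requiring $K>4p-2$, not merely $K\ge p-2$) plus a nonlinear term $(f_i^\delta)^{1+q(p-2)}$ that is then dominated, via Gagliardo--Nirenberg applied to $(f_i^\delta)^{(p-1)/p}$ together with mass conservation, by the $p$-Laplacian dissipation. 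Without this extra step your coercive bound on $\delta\int\vv^{K+2}f_i^\delta$ does not close. As a minor point, no Gronwall argument is needed in the final combination; the two inequalities add up directly after absorption, so the Csisz\'ar-type conversion you invoke is dispensable.
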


\begin{proof}
We split the proof in several parts.

{\em Step 1: Test function $\log f_i^\delta$.}
We use the test function $\log f_i^\delta$ in \eqref{4.approx}. Again, strictly speaking,
this test function cannot be used since we cannot exclude that $f_i^\delta=0$. 
We show in Appendix \ref{sec.app} how this argument
can be made rigorous. We obtain from formulation \eqref{1.gradflow} and
property \eqref{1.QlogM}
\begin{align}\label{4.I1011}
  &\sum_{i=1}^s\int_{\R^3} f_i^\delta(t)\log f_i^\delta(t)\dv
	- \sum_{i=1}^s\int_{\R^3} f_i^0\log f_i^0\dv
	+ \delta c_p\sum_{i=1}^s\int_0^t\int_{\R^3}
	|\na(f_i^\delta)^{(p-1)/p}|^p\dv\ds \\
	&\phantom{xxx}{}+ \sum_{i,j=1}^s\int_0^t\int_{\R^3} c_{ji}[f^\delta]f_i^\delta
	\bigg|\na\log\frac{f_i^\delta}{M_{ij}[f^\delta]}\bigg|^2\dv\ds \nonumber \\
	&\phantom{x}\le -\delta\sum_{i=1}^s\int_0^t\int_{\R^3}\vv^K f_i^\delta\log f_i^\delta\dv\ds
	\nonumber \\
	&\phantom{xxx}{}+ \delta\sum_{i=1}^s\int_0^t\bigg(\int_{\R^3}\log f_i^\delta g(v)\dv\bigg)
	\bigg(\int_{\R^3}\vv^K f_i^\delta\dv\bigg)\ds =: I_{10}+I_{11}. \nonumber
\end{align}
By mass conservation,
$$
  \int_{\R^3}\log f_i^\delta g(v)\dv\le \int_{\{f_i^\delta\ge 1\}}\log f_i^\delta g(v)\dv
	\le C \int_{\{f_i^\delta\ge 1\}}(1+f_i^\delta)g(v)\dv \le C,
$$
and consequently,
$$
  I_{11} \le C\delta\sum_{i=1}^s\int_0^t\int_{\R^3}\vv^K f_i^\delta\dv\ds
	\le C\delta + \frac{\delta}{32}\sum_{i=1}^s\int_0^t\int_{\R^3}\vv^{K+2} f_i^\delta\dv\ds.
$$
The term $I_{10}$ can be written as
$$
  I_{10} \le \delta\sum_{i=1}^s\int_0^t\int_{\R^3}\vv^K f_i^\delta
	\bigg(\log\frac{1}{f_i^\delta}\bigg)^+ \dv\ds,
$$
recalling that $z^+=\max\{0,z\}$. We choose $0<\alpha<1/(K+2)$ and use the inequality
$\log z\le z^\alpha/\alpha$ for $z=1/f_i^\delta>1$ as well as Young's inequality to estimate
\begin{align*}
  \vv^K f_i^\delta\bigg(\log\frac{1}{f_i^\delta}\bigg)^+
	&= \vv^K\mathrm{1}_{\{f_i^\delta<1\}}f_i^\delta\log\frac{1}{f_i^\delta}
	\le \frac{1}{\alpha}\vv^K (f_i^\delta)^{1-\alpha} \\
	&= \alpha^{-1}\vv^{-1}\big(\vv^{K+1}(f_i^\delta)^{1-\alpha}\big)
	\le \alpha^{-1/\alpha}\vv^{-1/\alpha} + \vv^{(K+1)/(1-\alpha)}f_i^\delta.
\end{align*}
It follows from $K>1$ that $-1/\alpha<-(K+2)<-3$ and hence, the integral over $\vv^{-1/\alpha}$
is finite. This yields, since $(K+1)/(1-\alpha)<K+2$,
$$
  I_{10} \le C\delta + \delta\sum_{i=1}^s\int_0^t\int_{\R^3}
	\vv^{(K+1)/(1-\alpha)}f_i^\delta\dv\ds
	\le C\delta + \frac{\delta}{32}\sum_{i=1}^s\int_0^t\int_{\R^3}\vv^{K+2}f_i^\delta\dv\ds.
$$
We insert the estimate for $I_{10}$ and $I_{11}$ into \eqref{4.I1011} to find that
\begin{align}\label{4.flogf}
  \sum_{i=1}^s&\int_{\R^3} f_i^\delta(t)\log f_i^\delta(t)\dv
	+ \delta c_p\sum_{i=1}^s\int_0^t\int_{\R^3}|\na(f_i^\delta)^{(p-1)/p}|^p\dv\ds \\
	&\phantom{xx}{}+ \sum_{i,j=1}^s\int_0^t\int_{\R^3} c_{ji}[f^\delta]f_i^\delta
	\bigg|\na\log\frac{f_i^\delta}{M_{ij}[f^\delta]}\bigg|^2\dv\ds \nonumber \\
	&\le C + \frac{\delta}{16}\sum_{i=1}^s\int_0^t\int_{\R^3}
	\vv^{K+2} f_i^\delta \dv\ds. \nonumber
\end{align}
We need to estimate the right-hand side.

{\em Step 2: Test function $|v|^2$.}
We use the test function $|v|^2$ (more precisely a suitable cutoff function, see
Appendix \ref{sec.app}) in \eqref{4.approx}. Since the collision operator conserves
the energy (see Lemma \ref{lem.cons}), the corresponding integral vanishes,
and we end up with
\begin{align*}
  \sum_{i=1}^s&\int_{\R^3}f_i^\delta(t)|v|^2\dv - \sum_{i=1}^s\int_{\R^3}f_i^0|v|^2\dv
	+ \delta\sum_{i=1}^s\int_0^t\int_{\R^3}\vv^K|v|^2 f_i^\delta\dv\ds \\
	&= \sum_{i=1}^s\int_0^t\bigg(\int_{\R^3}|v|^2g(v)\dv\bigg)
	\bigg(\int_{\R^3}\vv^K f_i^\delta\dv\bigg)\ds \\
	&\phantom{xx}{}
	- 2\delta\sum_{i=1}^s\int_0^t\int_{\R^3}|\na f_i^\delta|^{p-2}\na f_i^\delta\cdot v\dv\ds \\
	&\le C(\delta) + \frac{\delta}{8}\sum_{i=1}^s\int_0^t\int_{\R^3}\vv^{K+2} f_i^\delta\dv\ds
	+ 2\delta\sum_{i=1}^s\int_0^t\int_{\R^3}|\na f_i^\delta|^{p-1}|v|\dv\ds.
\end{align*}
Since $\vv^K|v|^2 = \vv^{K+2}-\vv^K\ge \frac12\vv^{K+2}-C$, 
the last term on the left-hand side is bounded from below by
\begin{align*}
  \delta\sum_{i=1}^s\int_0^t\int_{\R^3}\vv^K|v|^2 f_i^\delta\dv\ds
	&\ge \frac{\delta}{2}\sum_{i=1}^s\int_0^t\int_{\R^3}\vv^{K+2} f_i^\delta\dv\ds
	- C\delta \sum_{i=1}^s\int_0^t\int_{\R^3}f_i^\delta\dv\ds \\
	&\ge \frac{\delta}{2}\sum_{i=1}^s\int_0^t\int_{\R^3}\vv^{K+2} f_i^\delta\dv\ds - C\delta,
\end{align*}
where we used again mass conservation in the last step. Therefore,
\begin{equation}\label{4.aux3}
  \sum_{i=1}^s\int_{\R^3}f_i^\delta(t)|v|^2\dv
	+ \frac{3\delta}{8}\sum_{i=1}^s\int_0^t\int_{\R^3}\vv^{K+2} f_i^\delta\dv\ds 
	\le C + 2\delta\sum_{i=1}^s\int_0^t\int_{\R^3}
	|\na f_i^\delta|^{p-1}|v|\dv\ds.
\end{equation}

We estimate the term on the right-hand side of \eqref{4.aux3}. Let $q>1$.
We apply Young's inequality twice with exponents $(p,p/(p-1))$ and $(q,q/(q-1))$:
\begin{align}
  2\delta\int_{\R^3}&|\na f_i^\delta|^{p-1}|v|\dv
	\le C\delta\int_{\R^3}\big(|v||f_i^\delta|^{(p-1)/p}\big)
	|\na(f_i^\delta)^{(p-1)/p}|^{p-1}\ds \nonumber \\ 
	&\le C\delta\int_{\R^3}|v|^p|f_i^\delta|^{p-1}\dv 
	+ \frac{\delta}{4}c_p\int_{\R^3}|\na(f_i^\delta)^{(p-1)/p}|^p\dv \label{4.aux4} \\
	&\le \delta\int_{\R^3}\bigg(\frac{q-1}{q}\big(C|v|^{p}(f_i^\delta)^{1-1/q}\big)^{q/(q-1)}
	+ \frac{1}{q}(f_i^\delta)^{(p-2+1/q)q}\bigg)\dv \nonumber \\
	&\phantom{xx}{}+ \frac{\delta}{4}c_p\int_{\R^3}|\na(f_i^\delta)^{(p-1)/p}|^p\dv 
	\nonumber v\\
	&\le C\delta\int_{R^3}|v|^{pq/(q-1)}f_i^\delta\dv
	+ \frac{\delta}{q}\int_{\R^3}(f_i^\delta)^{1+q(p-2)}\dv 
	+ \frac{\delta}{4}c_p\int_{\R^3}|\na(f_i^\delta)^{(p-1)/p}|^p\dv, \nonumber 
\end{align}
where $c_p>0$ is as in \eqref{4.flogf}.
We deduce from the Gagliardo--Nirenberg inequality that
\begin{align*}
  & \|\psi\|_{L^r(\R^3)}\le C\|\na\psi\|_{L^p(\R^3)}^\theta
	\|\psi\|_{L^{p/(p-1)}(\R^3)}^{1-\theta}, \quad\mbox{where} \\
  & r = \frac{p}{p-1}(1+q(p-2)), \quad \theta = \frac{3q(p-1)(p-2)}{2(2p-3)(1+q(p-2))},
\end{align*}
applied to $\psi=(f_i^\delta)^{(p-1)/p}$, that
\begin{align*}
  \frac{\delta}{q}\int_{\R^3}(f_i^\delta)^{1+q(p-2)}\dv
	&= \frac{\delta}{q}\|(f_i^\delta)^{(p-1)/p}\|_{L^r(\R^3)}^r
	\le C\delta\|\na(f_i^\delta)^{(p-1)/p}\|_{L^p(\R^3)}^{r\theta}
	\|f_i^\delta\|_{L^1(\R^3)}^{(p-1)(1-\theta)/p} \\
	&\le C\delta\|\na(f_i^\delta)^{(p-1)/p}\|_{L^p(\R^3)}^{r\theta}
	\le \frac{\delta}{4}c_p\|\na(f_i^\delta)^{(p-1)/p}\|_{L^p(\R^3)}^p + C\delta,
\end{align*}
where we used mass conservation in the last but one step and the fact $r\theta<p$
as well as Young's inequality in the last step.
Choosing $q=4/3$, the first term on the 
right-hand side of \eqref{4.aux4} is estimated according to
\begin{align*}
  C\delta\int_{R^3}|v|^{pq/(q-1)}f_i^\delta\dv
	= C\delta\int_{R^3}|v|^{4p}f_i^\delta\dv
	\le \frac{\delta}{4}\int_{\R^3}\vv^{K+2}f_i^\delta\dv + C\delta, 
\end{align*}
if we choose $K>4p-2$ so that $4p<K+2$. We conclude from \eqref{4.aux4} that
\begin{align*}
  2\delta\int_{\R^3}&|\na f_i^\delta|^{p-1}|v|\dv
	\le C\delta + \frac{\delta}{4}\int_{\R^3}\vv^{K+2}f_i^\delta\dv
	+ \frac{\delta}{2}c_p\|\na(f_i^\delta)^{(p-1)/p}\|_{L^p(\R^3)}^p
\end{align*}
and then from \eqref{4.aux3} that
\begin{align*}
  \sum_{i=1}^s\int_{\R^3}f_i^\delta|v|^2\dv
	&+ \frac{\delta}{8}\sum_{i=1}^s\int_0^t\int_{\R^3}\vv^{K+2} f_i^\delta\dv\ds \\
	&\le C + \frac{\delta}{2}c_p\sum_{i=1}^s\int_0^t\int_{\R^3}
	|\na(f_i^\delta)^{(p-1)/p}|^p\dv\ds.
\end{align*}

{\em Step 3: End of the proof.}
We add the previous inequality to \eqref{4.flogf}:
\begin{align*}
  &\sum_{i=1}^s\int_{\R^3}\big(f_i^\delta(t)\log f_i^\delta(t) + f_i^\delta(t)|v|^2\big)\dv
	+ \frac{\delta}{2}c_p\sum_{i=1}^s\int_0^t\int_{\R^3}|\na(f_i^\delta)^{(p-1)/p}|^p\dv\ds \\
	&\phantom{x}{}+ \frac{\delta}{16}\sum_{i=1}^s\int_0^t\int_{\R^3}\vv^{K+2}f_i^\delta\dv\ds
	+ \sum_{i,j=1}^s\int_0^t\int_{\R^3} c_{ji}[f^\delta]f_i^\delta
	\bigg|\na\log\frac{f_i^\delta}{M_{ij}[f^\delta]}\bigg|^2\dv\ds
	\le C.
\end{align*}
This concludes the proof.
\end{proof} 

The energy bound in Lemma \ref{lem.delta} shows 
that the temperature $T_i[f^\delta]$, defined in \eqref{1.mom}, is
bounded from above uniformly in $\delta$ and $(0,T)$. 
This implies that $c_{ji}[f^\delta]$, defined in \eqref{1.cji}, 
is bounded from above uniformly in $\delta$ and $(0,T)$ when $\gamma\geq 0$.
We claim that the temperature $T_{ji}[f^\delta]$ is also uniformly bounded from below, 
which implies that $c_{ji}[f^\delta]$ is bounded from above uniformly in $\delta$ and $(0,T)$ also when $\gamma < 0$.

\begin{lemma}\label{lem.lowerT}
There exists a constant $c>0$, only depending on the initial entropy 
(and in particular independent of $\delta$), such that 
$$
  \inf_{0<t<T}T_{ji}[f^\delta(t)] \ge c>0.
$$
\end{lemma}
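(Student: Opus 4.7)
The plan has two conceptually distinct pieces. \emph{First}, reduce the bound on the multispecies temperature to a bound on the individual temperatures: from formula \eqref{1.Tji}, the second summand is nonnegative and the first is a convex combination of $T_i[f^\delta]$ and $T_j[f^\delta]$ with strictly positive weights, so
$$T_{ji}[f^\delta] \geq \min\{T_i[f^\delta],\, T_j[f^\delta]\}.$$
It thus suffices to exhibit a strictly positive lower bound for $T_i[f^\delta](t)$ depending only on the initial data, uniformly in $\delta$ and in $t\in(0,T)$.

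\emph{Second}, bound $T_i[f^\delta]$ from below via a Chebyshev-type estimate combined with a Fenchel--Young argument. Writing $u_i^\delta := u_i[f^\delta]$ and $B_\lambda(u_i^\delta) := \{v\in\R^3 : |v-u_i^\delta|<\lambda\}$, for any $\lambda>0$ the bound
$$T_i[f^\delta] = \frac{m_i}{3n_i}\int_{\R^3}f_i^\delta|v-u_i^\delta|^2\dv \geq \frac{m_i\lambda^2}{3n_i}\bigg(n_i - \int_{B_\lambda(u_i^\delta)}f_i^\delta\dv\bigg)$$
holds, with $n_i = \|f_i^0\|_{L^1(\R^3)}$ fixed by mass conservation. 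Applying the Fenchel--Young inequality $xA \leq x\log x + e^{A-1}$ (valid for $x\geq 0$ and $A>0$) pointwise to $x = f_i^\delta$ and integrating over $B_\lambda(u_i^\delta)$,
$$A\int_{B_\lambda(u_i^\delta)}f_i^\delta\dv \leq \int_{B_\lambda(u_i^\delta)}f_i^\delta\log f_i^\delta\dv + |B_\lambda|\,e^{A-1}.$$
The global entropy $\int_{\R^3}f_i^\delta\log f_i^\delta\dv$ is uniformly bounded in $\delta$ and $t$ by Lemma \ref{lem.delta}. To pass from this to a bound on the local entropy on $B_\lambda(u_i^\delta)$, one splits $f\log f = (f\log f)^+ - (f\log f)^-$ and controls the negative part by the standard splitting between $\{f\leq e^{-s(1+|v|^2)}\}$ and its complement, which yields $\int_{\R^3}(f\log f)^-\dv \leq s\int f(1+|v|^2)\dv + C(s)$; the right-hand side is uniformly bounded thanks to mass conservation and the energy bound of Lemma \ref{lem.delta}. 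Combining these steps gives
$$\int_{B_\lambda(u_i^\delta)}f_i^\delta\dv \leq \frac{1}{A}\bigl(C_0 + C_1\lambda^3 e^{A-1}\bigr),$$
with $C_0, C_1 > 0$ depending only on the initial entropy, energy, and mass.

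To close, I will fix $A$ large enough so that $C_0/A \leq n_i/4$, and then $\lambda$ small enough so that $C_1\lambda^3 e^{A-1}/A \leq n_i/4$; both choices depend only on the initial data. Substituting back into the Chebyshev estimate yields $T_i[f^\delta] \geq m_i\lambda^2/6 =: c > 0$, uniformly in $\delta$ and $t\in(0,T)$, which in turn produces the desired lower bound for $T_{ji}[f^\delta]$. I expect the Fenchel--Young step to be the main obstacle: the entropy is a signed quantity, so its restriction to $B_\lambda(u_i^\delta)$ cannot be directly read off from the global bound, and one must carefully control $(f\log f)^-$ by the energy to obtain constants $C_0, C_1$ that are genuinely $\delta$-uniform. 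I note that the resulting lower bound is also independent of the final time $T$, in agreement with the comment preceding the proof of Lemma \ref{lem.delta} and as required for the long-time asymptotic result \eqref{Mstar} discussed in the introduction.
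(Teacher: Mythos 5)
Your proposal is correct and follows essentially the same route as the paper: both reduce to $T_{ji}\geq\min\{T_i,T_j\}$, both use the same Chebyshev-type lower bound on $T_i$, and both close with a Fenchel--Young argument whose constants are controlled by the entropy and energy bounds of Lemma~\ref{lem.delta}. The only (cosmetic) difference is the choice of convex pair: the paper takes $\Phi(x)=\mu\bigl[(1+x)\log(1+x)-x\bigr]$ with $\Phi^*(y)=\mu e^{y/\mu}-y-\mu$, which is nonnegative and therefore lets one pass from $B_\lambda(u_i^\delta)$ to $\R^3$ in the integral of $\Phi(f_i^\delta)$ without worrying about signs, whereas you use the standard pair $(x\log x,\ e^{y-1})$, which forces the explicit splitting of $f\log f$ into positive and negative parts and the control of $(f\log f)^-$ by mass and energy. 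Both variants ultimately rest on the bound $\int_{\R^3}(f_i^\delta\log f_i^\delta)^+\dv\leq C$, so the difference in the choice of $\Phi$ does not change the substance; your observation that the resulting constant is also independent of $T$ (because the estimates in Lemma~\ref{lem.delta} come directly from the initial data, without a Gronwall factor) is correct and relevant to the long-time discussion in the introduction.
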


\begin{proof}
Define $\Phi(x)=\mu(1+x)\log(1+x)-\mu x$ for $x\ge 0$, where $\mu>0$. Then
$\Phi^*(y)=\mu e^{y/\mu}-y-\mu$ for $y\ge 0$ is its convex conjugate, and the
Fenchel--Young inequality $xy\le \Phi(x)+\Phi^*(y)$ holds. We infer from the lower 
bound \eqref{4.Tieps} and the Fenchel--Young inequality with $x=f_i^\delta$ and $y=1$ that
\begin{align*}
  T_i[f^\delta] &\ge C\lambda^2\bigg(n_i - \int_{\{|v-u_i|\le\lambda\}}
	f_i^\delta\dv\bigg) \\
	&\ge C\lambda^2\bigg(n_i - \mu\int_{\R^3}(1+f_i^\delta)\log(1+f_i^\delta)\dv
	- \frac43\pi\mu e^{1/\mu}\lambda^3\bigg) \\
	&\ge C\lambda^2\bigg(n_i - \mu C_0 - \frac43\pi\mu e^{1/\mu}\lambda^3\bigg),
\end{align*}
since the volume of the ball in $\R^3$ with radius $\lambda$ equals $4\pi\lambda^3/3$,
and $C_0$ depends on the initial data via the first estimate in Lemma \ref{lem.delta}.
Then, choosing $\mu=1/\log(C_0\lambda^{-3})$, a computation reveals that
$$
  T_i[f^\delta] \ge C\lambda^2\bigg(n_i - \frac{C_1}{\log(C_0\lambda^{-3})}\bigg),
	\quad\mbox{where }C_1 = C_0\bigg(1+\frac43\pi\bigg).
$$
It follows from the choice $\lambda=[C_0\exp(-2C_1/n_i)]^{1/3}$ that
$T_i[f^\delta]\ge c>0$ for $c=C\lambda^2n_i/2$, and this inequality is uniform in $(0,T)$.
It can be seen from \eqref{4.Tieps} that $C$ is proportional to $1/n_i$ such that
the constant $c$ only depends on the initial entropy and energy via $C_0$. Consequently,
$T_{ji}[f^\delta]\ge\min\{T_i[f^\delta],T_j[f^\delta]\}\ge c>0$.
\end{proof}

\begin{remark}\rm 
Observe that the uniform positive bound on $T_{ji}[f^\delta]$ yields a uniform
bound for $c_{ji}[f^\delta]$ in $L^\infty(0,T)$ even in the case $\gamma<0$ so that
$c_{ji}[f^\delta]$ is uniformly bounded in $L^\infty(0,T)$ for any $\gamma\in\R$.
We can also conclude a uniform positive bound for $c_{ji}[f^\delta]$ 
for every $\gamma\in\R$.
\qed
\end{remark}

\begin{lemma}\label{lem.H1}
There exists a constant $C>0$ independent of $\delta$ such that
$$
  \inf_{[0,T]}c_{ji}[f^\delta]\geq C^{-1},\quad 
  \sup_{[0,T]}c_{ji}[f^\delta]\leq C,\quad 
  \|\na f_i^\delta\|_{L^2(0,T;L^1(\R^3))}\le C.
$$
\end{lemma}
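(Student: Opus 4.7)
The argument proceeds in three stages.

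First, I would combine two uniform-in-$\delta$ temperature bounds. The energy estimate in Lemma \ref{lem.delta} gives $T_i[f^\delta(t)]\le C$ via $3n_i T_i/m_i = \int f_i^\delta|v-u_i|^2\dv \le \int f_i^\delta|v|^2\dv \le C$. The proof of Lemma \ref{lem.lowerT} actually produces a pointwise (in $t$) lower bound $T_i[f^\delta(t)]\ge c>0$ on the single-species temperatures themselves before being used to bound $T_{ji}$. Hence $0<c\le T_j[f^\delta](t)\le C$ uniformly in $\delta$ and $t\in(0,T)$, and substituting into \eqref{1.cji} yields $C^{-1}\le c_{ji}[f^\delta](t)\le C$ uniformly for every $\gamma\in\R$, regardless of the sign of $\gamma$.

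Second, having $c_{ji}[f^\delta]\ge C^{-1}$, I would turn the entropy dissipation bound in Lemma \ref{lem.delta} into a Fisher-information bound. Expanding $\na\log(f_i^\delta/M_{ij}[f^\delta]) = \na f_i^\delta/f_i^\delta - \na\log M_{ij}[f^\delta]$ and using the elementary inequality $|a-b|^2\ge\tfrac{1}{2}|a|^2-|b|^2$ with $a=\na f_i^\delta/\sqrt{f_i^\delta}$ and $b=\sqrt{f_i^\delta}\,\na\log M_{ij}[f^\delta]$ gives
$$
f_i^\delta\bigg|\na\log\frac{f_i^\delta}{M_{ij}[f^\delta]}\bigg|^2 \ge \frac{1}{2}\frac{|\na f_i^\delta|^2}{f_i^\delta} - f_i^\delta|\na\log M_{ij}[f^\delta]|^2.
$$
Since $\na\log M_{ij}[f^\delta] = -m_i(v-u_{ij}[f^\delta])/T_{ij}[f^\delta]$, the correction term integrates to a constant multiple of $T_{ij}^{-2}\int f_i^\delta|v-u_{ij}|^2\dv$, which is uniformly bounded thanks to the lower bound on $T_{ij}$ from Lemma \ref{lem.lowerT}, the energy bound from Lemma \ref{lem.delta}, and the fact that $|u_{ij}[f^\delta]|$ is controlled (via \eqref{1.uji}, mass conservation, and Jensen's inequality) by $\int f_k^\delta|v|\dv$, hence by the energy. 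Integrating in time and using $c_{ji}[f^\delta]\ge C^{-1}$ yields
$$
\int_0^T\int_{\R^3}\frac{|\na f_i^\delta|^2}{f_i^\delta}\dv\ds \le C.
$$

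Third, the Cauchy--Schwarz inequality
$$
\|\na f_i^\delta(t)\|_{L^1(\R^3)} \le \|f_i^\delta(t)\|_{L^1(\R^3)}^{1/2}\bigg(\int_{\R^3}\frac{|\na f_i^\delta(t)|^2}{f_i^\delta(t)}\dv\bigg)^{1/2}
$$
combined with mass conservation and squaring/integrating in time delivers the desired bound $\|\na f_i^\delta\|_{L^2(0,T;L^1(\R^3))}\le C$. The only technical subtlety is justifying the Fisher-information manipulation on the set where $f_i^\delta$ may vanish; this can be handled by the standard regularization $f_i^\delta\mapsto f_i^\delta+\eta$, $\eta\to 0$, exactly as in the rigorous treatment of $\log f_i^\delta$ used in Appendix \ref{sec.app} for Lemma \ref{lem.delta}.
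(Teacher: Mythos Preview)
Your proposal is correct and follows essentially the same route as the paper: the $c_{ji}$ bounds come from the two-sided temperature bounds (Lemmas \ref{lem.delta} and \ref{lem.lowerT}) inserted into \eqref{1.cji}, the entropy dissipation is converted into a Fisher-information bound via the triangle-type inequality for $\na\log(f_i^\delta/M_{ij})$ with the $f_i^\delta|\na\log M_{ij}|^2$ correction controlled by the energy, and the $L^2(0,T;L^1)$ bound on $\na f_i^\delta$ then follows from Cauchy--Schwarz and mass conservation. The only cosmetic difference is that the paper keeps the factor $c_{ji}[f^\delta]$ inside the integrals until the very end before invoking its lower bound, whereas you use $c_{ji}\ge C^{-1}$ earlier; the computations are otherwise identical.
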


\begin{proof}
The bounds for $c_{ji}[f^\delta]$ follow from definitions \eqref{1.mom} and \eqref{1.cji}
as well as Lemmas \ref{lem.delta} and \ref{lem.lowerT}.
By the second estimate in Lemma \ref{lem.delta} and the fact that 
$f_i^\delta|\na\log M_{ii}[f^\delta]|^2$ (which is bounded by the energy)
is uniformly bounded in $L^\infty(0,T;L^1(\R^3))$,
\begin{align*}
  \int_0^T&\int_{\R^3}c_{ji}[f^\delta]|\na(f_i^\delta)^{1/2}|^2\dv\ds
	= \frac14\int_0^T\int_{\R^3}c_{ji}[f^\delta]f_i^\delta|\na\log f_i^\delta|^2\dv\ds \\
	&\le \frac12\int_0^T\int_{\R^3}c_{ji}[f^\delta]\bigg(f_i^\delta
	\bigg|\na\log\frac{f_i^\delta}{M_{ij}}[f^\delta]\bigg|^2
  + f_i^\delta|\na\log M_{ij}[f^\delta]|^2\bigg)\dv\ds \le C.
\end{align*}
Consequently, by the Cauchy--Schwarz inequality,
\begin{align*}
  \int_0^T\bigg(\int_{\R^3} & c_{ji}[f^\delta]|\na f_i^\delta|\dv\bigg)^2\ds
	= 4\int_0^T c_{ji}[f^\delta]^2\bigg(\int_{\R^3}(f_i^\delta)^{1/2}
	|\na(f_i^\delta)^{1/2}|\dv\bigg)^2\ds \\
	&\le 4\int_0^Tc_{ji}[f^\delta]^2\|(f_i^\delta)^{1/2}\|_{L^2(\R^3)}^2
	\|\na(f_i^\delta)^{1/2}\|_{L^2(\R^3)}^2\ds \\
	&\le 4\sup_{0<t<T}\|f_i^\delta(t)\|_{L^1(\R^3)}\int_0^T\int_{\R^3}c_{ji}[f^\delta]^2
	|\na(f_i^\delta)^{1/2}|^2\dv\ds \le C.
\end{align*}
The lemma follows from the uniform lower bound for $c_{ji}[f^\delta]$.
\end{proof}

We claim that $\pa_t f_i^\delta$ is uniformly bounded in
$L^r(0,T;W^{-1,1}(\R^3))$ for some $r>1$. Indeed, by Lemma \ref{lem.delta} 
and Jensen's inequality \eqref{4.vv}, 
$\delta\vv^K f_i^\delta$ is uniformly bounded in $L^{(K+2)/K}(0,T;$ $L^{1}(\R^3))$
and $f_i^\delta(v-u_{ji}[f^\delta])$ is uniformly bounded in $L^\infty(0,T;L^1(\R^3))$.
Lemma \ref{lem.delta} also shows that $\delta|\na f_i^\delta|^{p-2}\na f_i^\delta$ is uniformly bounded in $L^{p/(p-1)}(0,T;L^{p/(p-1)}(\R^3))$ and by Lemma \ref{lem.H1}, 
$c_{ji}[f^\delta]\na f_i^\delta$ is uniformly bounded in $L^2(0,T;L^1(\R^3))$. 
This shows the claim with $r=\min\{(K+2)/K,p/(p-1),2\}$.

Since the embedding $W^{1,1}(\R^3)\cap L^1(\R^3;(1+|v|^2)\dv)
\hookrightarrow L^1(\R^3)$ is compact (the proof is similar to that one of Lemma
\ref{lem.comp}), we can apply the Aubin--Lions lemma to conclude
the existence of a subsequence (not relabeled) such that
$$
  f_i^\delta\to f_i\quad\mbox{strongly in }L^2(0,T;L^1(\R^3)).
$$
Furthermore, for a subsequence,
$$
  \pa_t f_i^\delta\rightharpoonup \pa_t f_i \quad\mbox{weakly in }L^r(0,T;W^{-1,1}(\R^3)),
$$
and $\delta\diver(|\na f_i^\delta|^{p-2}\na f_i^\delta)\to 0$ strongly in
$L^p(0,T;W^{-1,p}(\R^3))$. 

Next, we claim that 
$$
  \delta\vv^K f_i^\delta\to 0\quad\mbox{strongly in }L^1(0,T;L^1(\R^3)). 
$$
Indeed, the strong convergence of $f_i^\delta$ and the uniform bound for $\vv^{K+2}f_i^\delta$
show that, for any $R>0$,
\begin{align*}
  \limsup_{\delta\to 0}&\int_0^T\int_{\R^3}\delta\vv^K f_i^\delta\dv\ds
	= \limsup_{\delta\to 0}\int_0^T\bigg(\delta\int_{\{|v|\le R\}}\vv^K f_i^\delta\dv
	+ \delta\int_{\{|v|>R\}}\vv^K f_i^\delta\dv\bigg)\ds \\
	&= \limsup_{\delta\to 0}\int_0^T\int_{\{|v|>R\}}\vv^K f_i^\delta\dv\ds
	\le \frac{1}{R^2}\limsup_{\delta\to 0}\int_0^T\int_{\R^3}\vv^{K+2} f_i^\delta\dv\ds
	\le \frac{C}{R^2}.
\end{align*}
This yields $\limsup_{\delta\to 0}\int_0^T\int_{\R^3}\vv^K f_i^\delta\dv\ds=0$,
proving the claim.

The convergence $u_{ji}[f^\delta]\to u_{ji}[f]$ strongly in $L^q(0,T)$ for any $q<\infty$
follows from the uniform $L^\infty(0,T)$ bound of the energy 
$\sum_{i=1}^s\int_{\R^3}f_i^\delta|v|^2\dv$.
To show the convergence of the temperature $T_{ji}[f^\delta]$, we need a uniform bound
for a higher-order moment $\sum_{i=1}^s\int_{\R^3}f_i^\delta|v|^m\dv$ for some $m>2$.
This is done in a similar way as in Step 2 of Lemma \ref{lem.delta}, where we used the
test function $|v|^2$ in \eqref{4.approx}, but here we choose the test function $|v|^m$
with $m>2$. In this case, the collision operator gives a nonzero contribution, but
our previous estimates show that it is bounded, since $u_{ji}[f^\delta]$ is uniformly bounded
and $c_{ji}[f^\delta]$ and $T_{ji}[f^\delta]^{-1}$ are uniformly bounded from above.
This yields the existence of a constant $C>0$ such that
$$
  \sup_{0<t<T}\sum_{i=1}^s\int_{\R^3}\vv^m f_i^\delta(t)\dv \le C
	\quad\mbox{for some }m>2.
$$
It follows from this bound that $T_{ji}[f^\delta]\to T_{ji}[f]$ strongly in
$L^q(0,T)$ for every $q<\infty$. Now, we can pass to the limit $\delta\to 0$
in \eqref{4.approx}, showing that the limit function $f_i$ is a weak solution
to \eqref{1.eq}--\eqref{1.Tji}. 


\begin{appendix}

\section{A compactness result}\label{sec.comp}

\begin{lemma}\label{lem.comp}
The space $W^{1,p}(\R^3)\cap L^2(\R^3;(1+|v|^2)\dv)$ with $p>3$ is compactly embedded into
$L^2(\R^3)$ and in $L^\infty(\R^3)$.
\end{lemma}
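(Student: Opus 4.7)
The plan is to combine a uniform $L^2$-tail estimate with local compactness coming from Morrey's embedding, and then upgrade the $L^2$-tail to an $L^\infty$-tail via a Gagliardo--Nirenberg interpolation that trades a (bounded) gradient norm against the decaying weighted $L^2$-tail.

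Let $\{f_n\}$ be a bounded sequence in $W^{1,p}(\R^3)\cap L^2(\R^3;\vv^2\dv)$. The weight yields at once the uniform tail bound
\[
\int_{|v|>R} f_n^2\dv \le \frac{1}{R^2}\int_{\R^3}|v|^2 f_n^2\dv \le \frac{C}{R^2}.
\]
Since $p>3$, Morrey's inequality combined with Arzel\`a--Ascoli makes $W^{1,p}(B_R)\hookrightarrow C(\overline{B_R})$ compact for every $R>0$. A Cantor diagonal argument over an exhausting sequence of balls produces a subsequence (not relabeled) and a limit $f$ such that $f_n\to f$ uniformly on every $B_R$. Passing to the liminf in the weighted estimate via Fatou gives $\int_{|v|>R}f^2\dv\le C/R^2$ as well.

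For the $L^2(\R^3)$ convergence, fix $\eps>0$, choose $R$ with $C/R^2<\eps^2$ so that the tails of $f_n$ and $f$ outside $B_R$ are both controlled by $\eps$, and use the uniform convergence on the compact set $\overline{B_R}$.

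The $L^\infty(\R^3)$ convergence is the delicate part: a naive Morrey estimate on $B_1(v_0)$ produces $\|\na f_n\|_{L^p(B_1(v_0))}$, which is not known to decay as $|v_0|\to\infty$ uniformly in $n$. To circumvent this I plan to use the scale-invariant Gagliardo--Nirenberg inequality
\[
\|u\|_{L^\infty(\R^3)}\le C\|\na u\|_{L^p(\R^3)}^{\theta}\|u\|_{L^2(\R^3)}^{1-\theta},\qquad \theta=\frac{3p}{5p-6}\in(0,1),
\]
applied to $\phi_R f_n$, where $\phi_R\in C^\infty_c(\R^3)$ vanishes on $\{|v|\le R\}$, equals $1$ on $\{|v|\ge 2R\}$ and satisfies $|\na\phi_R|\le C/R$. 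Since $\|\na f_n\|_{L^p(\R^3)}+\|f_n\|_{L^p(\R^3)}\le C$ uniformly in $n$, this yields
\[
\|f_n\|_{L^\infty(|v|\ge 2R)}\le \|\phi_R f_n\|_{L^\infty(\R^3)}\le C\bigl(\|\na f_n\|_{L^p(\R^3)}+R^{-1}\|f_n\|_{L^p(\R^3)}\bigr)^{\theta}\|f_n\|_{L^2(|v|>R)}^{1-\theta}\le C R^{-(1-\theta)},
\]
uniformly in $n$, and the same bound holds for $f$. Together with the uniform convergence on $B_{2R}$ (taking first $n\to\infty$ and then $R\to\infty$), this yields $f_n\to f$ in $L^\infty(\R^3)$.

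The main obstacle is precisely the uniform $L^\infty$-tail decay; everything else is standard Sobolev machinery. The Gagliardo--Nirenberg step is what couples the two norms in the definition of the space and makes the embedding compact into $L^\infty$ rather than merely continuous.
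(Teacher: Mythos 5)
Your argument is correct, but for the $L^\infty$ part it is substantially more elaborate than the paper's. For the $L^2$ convergence your route is essentially the paper's: local compactness via Morrey/Arzel\`a--Ascoli on balls (the paper cites $W^{1,p}(B_M)\hookrightarrow L^\infty(B_M)$, you phrase it as $C(\overline{B_R})$, same thing), a diagonal subsequence, and the weighted $L^2$-tail to control the complement. For the $L^\infty$ convergence, however, the paper does \emph{not} need a uniform $L^\infty$-tail decay at all. Once $f_n\to f$ in $L^2(\R^3)$ is established, it applies the scale-invariant Gagliardo--Nirenberg inequality directly to $f_n-f$ on the whole space,
\[
\|f_n-f\|_{L^\infty(\R^3)}\le C\|\na(f_n-f)\|_{L^p(\R^3)}^{\beta}\|f_n-f\|_{L^2(\R^3)}^{1-\beta},\qquad \beta=\tfrac{3p}{5p-6}\in(0,1),
\]
and since $\|\na(f_n-f)\|_{L^p}$ is bounded while $\|f_n-f\|_{L^2}\to 0$, the left-hand side tends to zero in one step. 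This sidesteps entirely the obstacle you flagged (no control on $\|\na f_n\|_{L^p(B_1(v_0))}$ as $|v_0|\to\infty$): that issue only arises if one tries to run Morrey locally near infinity, whereas the global interpolation couples the bounded gradient norm against the already-converging $L^2$ norm. Your cutoff construction with $\phi_R$ and the resulting tail bound $\|f_n\|_{L^\infty(|v|\ge 2R)}\le CR^{-(1-\theta)}$ is sound and would also work, but it duplicates the $L^2$ tail-plus-local structure a second time at the cost of extra bookkeeping; the paper's version is shorter and shows more clearly that the $L^\infty$ compactness is a free corollary of the $L^2$ compactness plus the uniform $W^{1,p}$ bound.
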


\begin{proof}
The proof is inspired from \cite[Lemma 1]{CGZ20}. Let $(f_n)$ be bounded in 
$V:=W^{1,p}(\R^3)\cap L^2(\R^3;(1+|v|^2)\dv)$. It follows from the continuous embedding
$W^{1,p}(\R^3)\hookrightarrow L^\infty(\R^3)$ that there exists a subsequence, which
is not relabeled, such that $f_n\rightharpoonup f$ weakly in $L^\infty(\R^3)$ as $n\to\infty$.
Let $B_M\subset\R^3$ be the ball around the origin with radius $M>0$. Then, in view
of the compact embedding $W^{1,p}(B_M)\hookrightarrow L^\infty(B_M)$, up to a subsequence,
$f_n\to f$ strongly in $L^\infty(B_M)$. Thanks to a Cantor diagonal argument, the
subsequence $(f_n)$ can be chosen independent of $M$. By the uniform bound in $V$ and
Fatou's lemma, we have $f\in V$. Next, for sufficiently large $n\in\N$,
\begin{align*}
  \|f_n-f\|_{L^2(\R^3)} &= \int_{B_M}|f_n-f|^2\dv + \int_{\R^3\setminus B_M}|f_n-f|^2\dv \\
	&\le \frac{\eps}{2} + \frac{1}{M^2}\int_{\R^3}(1+|v|^2)|f_n-f|^2\dv \le \eps,
\end{align*}
if we choose also $M>0$ sufficiently large. 
Hence, $f_n\to f$ strongly in $L^2(\R^3)$. We use the Gagliardo--Nirenberg inequality
with $\beta=3p/(5p-6)\in(0,1)$:
$$
  \|f_n-f\|_{L^\infty(\R^3)} \le C\|\na(f_n-f)\|_{L^p(\R^3)}^\beta
	\|f_n-f\|_{L^2(\R^3)}^{1-\beta} \le C\|f_n-f\|_{L^2(\R^3)}^{1-\beta}\to 0
$$
as $n\to\infty$. This concludes the proof.
\end{proof}


\section{Rigorous test functions}\label{sec.app}

We have used $\vv^\theta$ for $\theta\ge 0$ and $\log f_i^\delta$ as test functions in
the corresponding weak formulations, which is not rigorous. To make the computations
rigorous, we need to approximate.
First, we introduce the cutoff functions
$$
  \psi_R(x) = \psi_1\bigg(\frac{x}{R}\bigg), \quad
	\psi_1(x) = \begin{cases}
	1 &\mbox{if }|x|<1, \\
	\frac12(1+\cos(\pi(|x|-1))) &\mbox{if }1\le |x|\le 2, \\
	0 &\mbox{if }|x|>2,
	\end{cases}
$$
and use $\vv^\theta\psi_R$ as a test function in \eqref{3.approx} (we take $\theta=0$
to verify the mass control). This leads to additional terms depending on $\psi_R$ and
$\na\psi_R$. We focus our attention to the most delicate one and use H\"older's
inequality with exponents $p/(p-1)$ and $p$ as well as 
$|\na\psi_R(v)|\le C/R$ in $\{R<|v|<2R\}$ and $|\na\psi_R|=0$ else:
\begin{align*}
  \int_{\R^3}&|\na f_i^\eps|^{p-1}|\na\psi_R|\vv^\theta\dv
	\le \frac{\delta}{4}\int_{\R^3}|\na f_i^\eps|^p\dv
	+ C(\delta)\int_{\R^3}|\na\psi_R|^p\vv^{p\theta}\dv \\
	&\le \frac{\delta}{4}\int_{\R^3}|\na f_i^\eps|^p\dv
	+ \frac{C(\delta)}{R^p}\int_{\{|v|<2R\}}\vv^{p\theta}\dv
	\le \frac{\delta}{4}\int_{\R^3}|\na f_i^\eps|^p\dv + C(\delta)R^{-p+p\theta+3},
\end{align*}
and the last term vanishes as $R\to\infty$ since we have chosen $0<\theta<1-3/p$.

Second, we use the test function $\log(f_i^\delta+\eta)-\log\eta$ for $0<\eta<1$ in
\eqref{4.approx}. For this, we observe that, by \eqref{1.QlogM},
\begin{align*}
  \sum_{i,j=1}^s&\int_{\R^3}c_{ij}[f^\delta]f_i^\delta\na\log\frac{f_i^\delta}{M_{ii}[f^\delta]}
	\cdot\na\log(f_i^\delta+\eta)\dv \\
	&= \sum_{i,j=1}^s\int_{\R^3}c_{ij}[f^\delta]f_i^\delta
	\bigg(1-\frac{\eta}{f_i^\delta+\eta}\bigg)\na\log\frac{f_i^\delta}{M_{ij}[f^\delta]}
	\cdot\na\log f_i^\delta\dv \\
	&= \sum_{i,j=1}^s\int_{\R^3}c_{ij}[f^\delta]f_i^\delta
	\bigg|\na\log\frac{f_i^\delta}{M_{ij}[f^\delta]}\bigg|^2\dv \\
	&\phantom{xx}{}- \sum_{i,j=1}^s\int_{\R^3}c_{ij}[f^\delta]\frac{\eta}{f_i^\delta+\eta}
	\na\log\frac{f_i^\delta}{M_{ij}[f^\delta]}\cdot\na f_i^\delta\dv \\
	&= \sum_{i,j=1}^s\int_{\R^3}c_{ij}[f^\delta]f_i^\delta\bigg(1 - \frac{\eta}{f_i^\delta+\eta}
	\bigg)\bigg|\na\log\frac{f_i^\delta}{M_{ij}[f^\delta]}\bigg|^2\dv \\
	&\phantom{xx}{}-\eta\sum_{i,j=1}^s\int_{\R^3}c_{ij}[f^\delta]
	\frac{f_i^\delta}{f_i^\delta+\eta}\na\log\frac{f_i^\delta}{M_{ij}[f^\delta]}
	\cdot\na\log M_{ij}[f_i^\delta]\dv.
\end{align*}
Then we obtain from \eqref{4.approx}, putting all terms of order $\eta$ to the
right-hand side,
\begin{align}
  \sum_{i=1}^s&\int_{\R^3}
  \big((f_i^\delta(t)+\eta)\log(f_i^\delta(t)+\eta) - \eta\log\eta\big)
	\dv\nonumber\\
	&\phantom{xx}{} + \delta\sum_{i=1}^s\int_0^t\int_{\R^3}\vv^K 
	f_i^\delta\log(f_i^\delta(t)+\eta)\dv\ds \nonumber \\
	&\phantom{xx}{}
	+ \delta c_p\sum_{i=1}^s\int_0^t\int_{\R^3}|\na(f_i^\delta+\eta)^{(p-1)/p}|^p\dv\ds 
	\label{app.aux} \\
	&\phantom{xx}{}+ \sum_{i,j=1}^s\int_0^t\int_{\R^3}c_{ij}[f^\delta]f_i^\delta
	\bigg(1 - \frac{\eta}{f_i^\delta+\eta}\bigg)
	\bigg|\na\log\frac{f_i^\delta}{M_{ij}[f^\delta]}\bigg|^2\dv\ds \nonumber \\
	&= \sum_{i=1}^s\int_{\R^3}
	\big((f_i^0 + \eta)\log(f_i^0+\eta) - \eta\log\eta\big)\dv \nonumber \\
	&\phantom{xx}{}+ \delta\sum_{i=1}^s\int_0^t\bigg(\int_{\R^3}\vv^K f_i^\delta\dv\bigg)
	\bigg(\int_{\R^3} g(v)\log(f_i^\delta+\eta)\dv\bigg)\dv \nonumber \\
	&\phantom{xx}{}
	+ \eta\sum_{i,j=1}^s\int_0^t\int_{\R^3}c_{ij}[f^\delta]\frac{f_i^\delta}{f_i^\delta+\eta}
	\na\log\frac{f_i^\delta}{M_{ij}[f^\delta]}\na\log M_{ij}[f_i^\delta]\dv. \nonumber 
\end{align}
The second term on the right-hand side can be bounded because of mass 
conservation.
The last integral can be controlled by
\begin{align*}
  \eta\sum_{i,j=1}^s&\int_0^t\int_{\R^3}c_{ij}[f^\delta]\frac{f_i^\delta}{f_i^\delta+\eta}
	\na\log\frac{f_i^\delta}{M_{ij}[f^\delta]}\na\log M_{ij}[f_i^\delta]\dv \\
	&\le \frac{1}{2}\sum_{i,j=1}^s\int_0^t\int_{\R^3}c_{ij}[f^\delta]f_i^\delta
	\bigg|\na\log\frac{f_i^\delta}{M_{ij}[f^\delta]}\bigg|^2\dv \\
	&\phantom{xx}{}+ \frac{1}{2}\sum_{i,j=1}^s\int_0^t\int_{\R^3}c_{ij}[f^\delta]f_i^\delta
	\bigg|\frac{\eta}{f_i^\delta+\eta}\bigg|^2|\na\log M_{ij}[f^\delta]|^2\dv.
\end{align*}
The first term on the right-hand side is absorbed by the left-hand side of \eqref{app.aux}.
The function
$$
  G_\eta(v) = f_i^\delta(v)\bigg|\frac{\eta}{f_i^\delta(v)+\eta}\bigg|^2|
	\na\log M_{ij}[f^\delta](v)|^2
$$
is uniformly bounded by 
$0\le G_\eta\le f_i^\delta|\na\log M_{ij}[f^\delta]|\in L^1(0,T;L^1(\R^3))$,
and converges to zero a.e.\ in $\R^3\times(0,T)$. Therefore, by dominated convergence,
$G_\eta\to 0$ strongly in $L^1(0,T;L^1(\R^3))$. Fatou's lemma allows us to perform
the limit $\eta\to 0$ in \eqref{app.aux}. Then, proceeding as in Step 1 of the
proof of Lemma \ref{lem.delta}, we derive the entropy inequality.
\end{appendix}


\end{document}